\newtheorem{theorem}{Theorem}[section]
\newtheorem{lemma}[theorem]{Lemma}
\newtheorem{proposition}[theorem]{Proposition}
\newtheorem{remark}[theorem]{Remark}
\numberwithin{equation}{section}
\newtheorem{definition}[theorem]{Definition}
\newtheorem{corollary}[theorem]{Corollary}
\begin{document}

\title {Irreducibility of moduli of vector bundles over a very general sextic Surface}


\author[S.Pal]{Sarbeswar Pal}

\address{Indian Institute of Science Education and Research, Thiruvananthapuram,
Maruthamala PO, Vithura,
Thiruvananthapuram - 695551, Kerala, India.}

\email{sarbeswar11@gmail.com,  spal@iisertvm.ac.in}

\keywords{vector bundles,  sextic surface, 
}
\date{}

\begin{abstract}
Let $S$ be a very general smooth hypersurface of degree $6$ in $\mathbb{P}^3$. In this paper we will prove that the moduli
space of $\mu$-stable rank $2$ torsion free sheaves with respect to hyperplane section having $c_1 = \mathcal{O}_S(1)$,
 with fixed, $c_2 \ge 27$ is irreducible. 
\end{abstract}

\maketitle

\section{Introduction}
Let $S$ be a projective irreducible smooth surface over $\mathbb C$  and $H$ an ample divisor on S.
Let $r \ge 1$ be an integer, $L$ a line bundle on $S$, and $c_2 \in H^4(S,\mathbb Z)\,\simeq \,\mathbb Z$.  The moduli space of semistable torsion free sheaves (w.r.t $H$) with fixed determinant $L$
and second Chern class $c_2$ was first constructed by Gieseker and Maruyama (see \cite{GM}, \cite{MAR}) using Mumford's geometric invariant theory 
and it's a projective scheme 
(need not be reduced). After the construction, many people have studied the geometry of this moduli space. The study has been done by fixing the underlying surface. For example when the surface is rational Barth \cite{WB}, Rosa Maria \cite{RM},  Le Potier \cite{LP} 
proved that the moduli space is reduced, irreducible and rational under certain condition on rank and Chern classes. When the surface is K3, it has been studied by Mukai \cite{SM} and many others.  When the surface is general, Jun Li \cite{JL} showed that for $c_2$ big enough, the moduli space is also of general type. The guiding general philosophy  
is that the geometry of the moduli space is reflected by the underlying geometry of the surface. The first result without fixing the underlying surface was given by O'Grady. In \cite{OG} O'Grady proved that for sufficiently large second Chern class $c_2$, the moduli space is reduced, generically smooth and irreducible. In fact, O'Grady's first step to prove irreducibility is to show each component is generically smooth of expected dimension. The generic smoothness result was also proved by Donaldson \cite{D} in the rank $2$ case and trivial determinant and Zuo \cite{ZUO} for arbitrary determinant.

After O'Grady's result, it was important to give an effective bound on $c_2$ for the irreducibility and generic smoothness of the moduli space.  The moduli space of vector bundles over hypersurfaces is one of the important objects to study.
When the underlying surface is a very general quintic hypersurface in $\mathbb P^3$ Simpson and Mestrano studied this question systematically and in \cite{KS}, the current author, with K. Dan, studied the question related to Brill-Noether loci.   In a series of paper \cite{SIM}, \cite{SIM2}, \cite{SIM3}, Simpson and Mestrano proved that moduli space of rank $2$ torsion free sheaves is generically smooth, irreducible. This result was known before by an unpublished work by Nijsse for $c_2 \ge 16$ \cite{N}.
  
Since rank and degree of the underlying sheaf are co-prime the stability is equivalent to semistability.
Motivated by the result of Mestrano and Simpson  we look at the next case i.e. Moduli space of rank $2$ torsion free sheaves on a very general sextic surface $S$, that is a very general hypersurface  of degree $6$  in 
$\mathbb{P}^3$. 

In \cite{SIM}, Simpson and Mestrano showed that the moduli space of stable rank $2$ bundles over a very general sextic surface 
is not irreducible for $c_2 =11$. The natural question arises, can one give an effective bound for $c_2$ such that the moduli space
become irreducible. In this paper, we answer this question.

Let $S \subset \mathbb{P}^3$ be a very general sextic surface over $\mathbb C$ and $H$ denote the very ample line bundle $\mathcal O_S(1)$. The Picard group of $S$ is generated by $H$. Let  $\mathcal{M}(H,c_2)$ denote the moduli space of
$H$-stable rank $2$ locally free sheaves on $S$ with fixed determinant isomorphic to $H$ and the second Chern class is $c_2$ and $\overline{\mathcal{M}(H, c_2)}$ be its closure in the Gieseker-Maruyama moduli space $\overline{\mathcal{M}}(H, c_2)$  of semistable torsion free sheaves. 

It is known that $\overline{\mathcal{M}}(H, c_2)$ is projective and $\mathcal{M}(H, c_2)$ sits inside
$\overline{\mathcal{M}}(H, c_2)$
as an open subset, whose complement is called the \textit{boundary}.

In this paper, we will give a lower bound for $c_2$ such that the moduli space $\overline{\mathcal{M}}(H, c_2)$ is 
irreducible.
\begin{theorem}
 For $c_2 \ge  27$, the moduli space $\overline{\mathcal{M}}(H, c_2)$ is irreducible.
\end{theorem}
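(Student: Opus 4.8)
The plan is to adapt the boundary--induction method of O'Grady, made effective in the quintic case by Mestrano and Simpson, to the sextic numerics. First I would record the invariants of a very general sextic $S$: adjunction gives $K_S = \mathcal{O}_S(2) = 2H$, the degree is $H^2 = 6$, and Noether's formula gives $\chi(\mathcal{O}_S) = 11$. For a rank $2$ sheaf $E$ with $c_1 = H$ and second Chern class $c_2$, Riemann--Roch yields $\chi(E) = 19 - c_2$, and the expected dimension of the moduli space is
$$ 4c_2 - c_1^2 - 3\chi(\mathcal{O}_S) = 4c_2 - 39. $$
Deformation theory provides the a priori bound $\dim_{[E]}\overline{\mathcal{M}}(H,c_2) \ge 4c_2 - 39$ at every stable point, the local structure being controlled by $\mathrm{Ext}^1(E,E)_0$ and the obstruction space $\mathrm{Ext}^2(E,E)_0$, which by Serre duality has dimension $\hom(E, E(2H))_0$. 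I would prove irreducibility by induction on $c_2$, the inductive engine being a careful analysis of the boundary of non-locally-free sheaves.

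A boundary sheaf $E$ sits in $0 \to E \to E^{**} \to Q \to 0$ with $E^{**}$ a stable bundle of second Chern class $c_2 - \ell$ and $Q$ a torsion sheaf of finite length $\ell \ge 1$; conversely such $E$ are recovered from $E^{**} \in \overline{\mathcal{M}}(H, c_2 - \ell)$ together with a length-$\ell$ torsion quotient $E^{**}\twoheadrightarrow Q$. A dimension count then gives each such stratum dimension $4c_2 - 39 - \ell$, so the boundary is dominated by its irreducible $\ell = 1$ part, of dimension $4c_2 - 40$ (one less than expected), provided $\overline{\mathcal{M}}(H, c_2 - 1)$ is irreducible by the inductive hypothesis. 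To start the induction and to control the sheaves carrying sections I would invoke the Serre construction: a stable bundle with a section fits into
$$ 0 \to \mathcal{O}_S \to E \to I_Z(H) \to 0, $$
with $Z \subset S$ of length $c_2$ satisfying Cayley--Bacharach with respect to $K_S + H = 3H$, and I would stratify by $h^0$ to describe these Brill--Noether loci over the irreducible Hilbert scheme $\mathrm{Hilb}^{c_2}(S)$. Since $\chi(E) = 19 - c_2 < 0$ in our range the generic bundle need not carry a section, so this stratification must be carried out with care.

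The decisive and most delicate step is to deduce from this that $\overline{\mathcal{M}}(H,c_2)$ has a single component. It suffices to show that (a) every irreducible component meets the boundary, and (b) the generic sheaf of the $\ell = 1$ boundary stratum is a smooth point of $\overline{\mathcal{M}}(H,c_2)$ and hence lies on a unique component; since that stratum is irreducible, (a) and (b) together force all components to coincide. Now (b) is the vanishing of the obstruction, $\mathrm{Ext}^2(E,E)_0 = 0$, equivalently $\hom(E, E(2H))_0 = 0$, at the generic boundary sheaf, while (a) rules out components consisting entirely of locally free sheaves, which I would obtain by showing that a generic bundle in any component deforms to a non-locally-free sheaf. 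Both statements reduce to bounding the \emph{bad locus} of sheaves $E$ admitting a nonzero trace-free homomorphism $\phi : E \to E(2H)$: using $\mu$-stability and $\mathrm{Pic}(S) = \mathbb{Z}H$ one constrains the slopes of the kernel and image of $\phi$ and estimates the dimension of the family of such $E$. The whole argument hinges on showing that this bad locus, and likewise the boundary strata with $\ell \ge 2$, have dimension strictly below $4c_2 - 40$; the threshold $c_2 \ge 27$ is exactly where these numerical estimates become valid, and I expect this dimension bound for the obstructed and special sheaves to be the main obstacle.
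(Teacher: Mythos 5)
Your numerics and several ingredients match the paper: $K_S=2H$, expected dimension $4c_2-39$, the stratification of the boundary by $\ell=c_2-c_2(E^{**})$ with only the $\ell=1$ stratum of codimension $1$, the dimension bound on the locus of sheaves admitting a trace-free $\phi:E\to E(2H)$, and the smoothness of the generic point of $\mathcal{M}(c_2,c_2-1)$. But there is a genuine gap in your inductive architecture. Your mechanism for forcing all components to coincide requires the $\ell=1$ stratum to be \emph{irreducible}, which you derive from irreducibility of $\overline{\mathcal{M}}(H,c_2-1)$ via the inductive hypothesis. That induction has no base case: at $c_2=27$ you would need irreducibility of $\overline{\mathcal{M}}(H,26)$, which is not available (the moduli space is in fact reducible for small $c_2$, e.g.\ $c_2=11$). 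The paper never uses irreducibility of the $\ell=1$ stratum. Instead it first proves that $\overline{\mathcal{M}}(H,c_2)$ is \emph{connected} for $c_2\ge 27$, by a separate induction in which the number of connected components is non-increasing in $c_2$, with base case $c_2=27$ handled by pushing boundary sheaves into deeper strata $\mathcal{M}(27,d)$ until $d\le 11$, where $h^0(F^{**})\ne 0$, and then deforming into the locus $\overline{V}$ of sheaves with sections. Goodness then makes the moduli space a local complete intersection, so Hartshorne's connectedness theorem forces any two meeting components to intersect in codimension $1$; that intersection lies in the singular locus and in the boundary, hence must contain an open subset of the unique codimension-$1$ stratum $\mathcal{M}(c_2,c_2-1)$, contradicting the smoothness of its generic point. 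This route needs no irreducibility of the stratum and no irreducibility at $c_2-1$.

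A second gap: your step (a), that every component meets the boundary, does not reduce to bounding the obstructed locus. It is the content of O'Grady's deformation-to-the-boundary machinery and occupies two full sections of the paper: one restricts to a curve $C\in|H|$, shows via a theta-divisor intersection-number argument that some bundle in the component restricts unstably to $C$, establishes non-emptiness of the Harder--Narasimhan strata $X_{C,d}$ up to $d=6$ by a determinant-line-bundle computation, and then produces a positive-dimensional Quot-scheme family degenerating to a non-locally-free sheaf. Even so, the paper obtains non-empty boundary for an arbitrary good component only when $c_2\ge 34$; in the range $27\le c_2\le 33$ it controls only components on which $h^0\equiv 0$, which is precisely why connectedness must be bootstrapped through the component containing $\overline{V}$. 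Your proposal treats this step as routine and also misattributes the threshold: the obstruction-locus estimates already hold for $c_2\ge 20$, and $27$ enters through the goodness of the compactified moduli space and the connectedness induction.
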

\subsection{Outline of the proof and the organization of the paper:}

Our technique is to use O'Grady's method of deformation to the boundary \cite{OG}, \cite{OG2}, as it was exploited by Nijsse \cite{N} in the case of a very general quintic hypersurface. In particular, we prove a connectedness Theorem of $\overline{\mathcal{M}}(H, c_2)$  as in Nijsse \cite{N}. To use Nijsse's argument in connectedness Theorem, which we prove in section \ref{S4}, we  need to prove certain HN-strata is non-empty, which we consider in  section \ref{S2}.  Using non-emptyness of the HN-strata, in section \ref{S3} we also prove the non-emptyness of the boundary for $c_2 \ge 34$. 

In section \ref{S1}, we will show that $\overline{\mathcal{M}}(H, c_2)$ is good for $c_2 \ge 27$. Then Kuranishi theory of deformation
spaces implies 
that $\overline{\mathcal{M}}(H, c_2)$ is locally a complete intersection. Hartshorne's connectedness theorem \cite{HA} 
now says that if two
different irreducible components of $\overline{\mathcal{M}}(H, c_2)$ meet at some point, then they intersect in a 
codimension 
1 subvariety. This intersection has to be contained in the singular locus.
We also show that the  singular locus of $\mathcal{M}(H, c_2)$  is the union of the locus $V$ consisting of 
bundles $E$,
with $h^0(E) >0$, which has dimension $3c_2 -21$, plus other pieces of dimension $\le 39$. The dimension bound of $V$ together
with the connectedness of $\overline{\mathcal{M}}(H, c_2)$ allow us to show that any two distinct irreducible components 
intersects
in boundary in codimension 1.
Then we show that the boundary has a stratification as the union of $\mathcal{M}(c_2, c_2-1)$ which is of codimension 1 
and other strata with codimension strictly greater than 1, where $\mathcal{M}(c_2, c_2-1)$ is defined in section 5. 
This fact forces that  two distinct components of $\overline{\mathcal{M}}(H, c_2)$ intersects $\mathcal{M}(c_2, c_2-1)$ 
in an open
set and contained in the singular locus. Then the irreducibility follows from the fact that a general point$\mathcal{M}(c_2, c_2-1)$ 
is a smooth point of $\overline{\mathcal{M}}(H, c_2)$.

\section{Generic smoothness of the moduli space}\label{S1}

Let $S$ be a smooth irreducible projective surface over the field of complex numbers.
Let $\mathcal{M}(r, H, c_2)$ be the moduli space of rank $r$, $H$-stable vector bundles with fixed determinant $H$ and second
Chern class $c_2$, where $H:= c_1(\mathcal{O}_S(1))$. Consider a point $E \in \mathcal{M}(r, H, c_2)$.
Then the obstruction theory is controlled by 
\[
 \text{Obs}(E) := H^2(S, \text{End}^0(E)).
\]
Here $\text{End}^0(E):= \text{Ker}(\text{tr}: \text{End}(E) \longrightarrow \mathcal{O}_S).$\\
By Serre duality,
\[
 H^2(S, \text{End}^0(E)) \cong H^0(S, \text{End}^0(E) \otimes K_S)^*.
\]
So, $\text{Obs}(E) \ne 0$ if and only if, there exists a non-zero element $\varphi \in H^0(S, \text{End}^0(E) \otimes K_S)$.
In other words, there exists a twisted endomorphism 
\[
 \varphi: E \longrightarrow E \otimes K_S, \text{with } \text{tr}(\varphi) = 0.
\]

Here, we only consider the case when $r=2$.

Let $\Sigma(c_2):= \{ E \in \mathcal{M}(r, H, c_2):  H^0(S, \text{End}^0(E) \otimes K_S) \ne 0\}$. We break $\Sigma(c_2)$ into
two parts.
\[
 \Sigma_1:= \{ E \in \Sigma(c_2): \exists s(\ne0) \in  H^0(S, \text{End}^0(E) \otimes K_S) \text{ and }\exists
 t \in H^0(S, K_S) \text{such that} \text{ det}(s) - t^2 = 0\}.
\]
and
\[
 \Sigma_2:= \Sigma(c_2) \setminus \Sigma_1.
\]

Case(1): Suppose $E \in \Sigma_1$. Then there exists a non-zero section $s \in  H^0(S, \text{End}^0(E) \otimes K_S)$ and 
$t \in H^0(S,  K_S)$ such that $\text{det}(s) - t^2 = 0$. Now consider the non-zero maps 
$\alpha_1:= s + \text{Id} \otimes t: E \longrightarrow E \otimes K_S$ and
$\alpha_2:=s - \text{Id} \otimes t: E \longrightarrow E \otimes K_S$. 

Note that $\alpha_2 \circ \alpha_1 = s^2 -t^2$. 
On the other hand, by Cayley-Hamilton theorem, $s$ satisfies the equation, $x^2 - \text{Tracce}(s) x +\text{det}(s) $. 
Since $s$ is trace free endomorphism, and $\text{det}(s) = t^2$, 
we have $s^2 -t^2 = 0$. 
Thus $\alpha_2 \circ \alpha_1 = 0$. 
Since, $\text{det}(\alpha_i) = 0, i = 1, 2$,
 we have the following diagram,
 
 \[
  \xymatrix{
& 0 \ar[r] & L_1 \ar[r] & E \ar[r] \ar[d]^{\alpha_1} & L_1^* \otimes \mathcal{O}_S(1) \otimes \mathcal{J}_{P_1} \ar[r]  & 0\\
& 0 \ar[r] & L_2 \otimes K_S  \ar[r]  & E \otimes K_S \ar[r] \ar[d]^{\alpha_2 \otimes Id} & L_2^* \otimes \mathcal{O}_S(1) \otimes K_S \otimes \mathcal{J}_{P_2} \ar[r] & 0, \\
& & & E \otimes K_S^2 & &\\
} 
 \]
 
where $L_i = \text{ker}(\alpha_i)$ and  $\mathcal{J}_{P_i}:= L_i \otimes \mathcal{O}_S(-1) \otimes (E/L_i)$. It is the ideal in $\mathcal{O}_S$ of a zero dimensional 
subscheme $P_i$.
Since the composition map $\alpha_2 \circ \alpha_1$ is zero, there is a  factorization through 
\[
 L_1^* \otimes \mathcal{O}_S(1) \otimes \mathcal{J}_{P_1} \longrightarrow L_2 \otimes K_S.
\]

Case(2):  If $E \in \Sigma_2$, then there exists a non-zero section $s \in H^0(S, \text{End}^0(E) \otimes K_S)$ with non-zero
determinant and which is not a square. In this case, as in \cite[Section 2]{ZUO}, $\text{det}(s)$ defines a double cover
$ r: Z \longrightarrow S$ with $Z \subset K_S$ and $r$ is ramified along  a non-zero subdivisor of  zeros of $\text{det}(s)$, together with a line 
bundle $L$ over a desingularization $\varepsilon: \tilde{Z} \longrightarrow Z$ such that $E = r_*\varepsilon_*(L)^{**}$.

We now apply this for sextic surface.
Let $S \subset \mathbb{P}^3$ be a  very general smooth surface of degree $6$. Then the canonical line bundle $K_S \simeq \mathcal{O}_S(2)$.
We denote by $\mathcal{M}(H, c_2)$ the moduli space of rank 2, $  \mu- $ stable vector bundles $E$ on $S$,
with respect to $H := \mathcal{O}_S(1)$ and with $c_1(E)=H$ and $c_2(E) =c_2$.
 Note that in this case, the stability and semistability are the same. The expected dimension of $\mathcal{M}(H, c_2)$ is
 $4c_2 -c_1^2 - 3\chi(\mathcal{O}_S) = 4c_2 -39$.
In this section, we will show that 
 the moduli space $\mathcal{M}(H, c_2)$ is generically smooth for $c_2 \ge 20$.
\begin{proposition}\label{Prop1}
 Suppose $S$ is as above. Suppose $E$ is a stable bundle of rank $2$ with 
 $\text{det}(E) \cong \mathcal{O}_S(1)$ and $E$ is potentially obstructed, that is, $H^0(S, \text{End}^0(E) \otimes K_S) \ne 0$. Then
 either\\
 (I) $H^0(S, E) \ne 0$

 or,
 
 (II) There is a section $\beta \in H^0(S, K_S^2)$ which is not a square defining 
 a double cover
$ r: Z \longrightarrow S$ with $Z \subset K_S$ and $r$ is ramified along $\text{zero}(\text{det}(s))$, together with a line 
bundle $L$ over a desingularization $\varepsilon: \tilde{Z} \longrightarrow Z$ such that $E = r_*\varepsilon_*(L)^{**}$.

\end{proposition}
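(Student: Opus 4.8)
The plan is to run the dichotomy $\Sigma(c_2)=\Sigma_1\cup\Sigma_2$ already isolated above. The hypothesis that $E$ is potentially obstructed means precisely that $E\in\Sigma(c_2)$, so I would show that membership in $\Sigma_2$ produces conclusion (II) essentially for free, while membership in $\Sigma_1$ forces conclusion (I) through a short numerical argument exploiting $\operatorname{Pic}(S)=\mathbb{Z}\cdot H$ and $K_S=\mathcal{O}_S(2)$.

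If $E\in\Sigma_2$, there is a nonzero $s\in H^0(S,\text{End}^0(E)\otimes K_S)$ whose determinant is nonzero and not a square. Then $\beta:=\det(s)\in H^0(S,K_S^2)$ is the desired section, and the spectral construction recalled in Case (2) (following Zuo \cite{ZUO}) attaches to it the double cover $r\colon Z\to S$ with $Z\subset K_S$ ramified along $\operatorname{zero}(\det(s))$, together with a line bundle $L$ on a desingularization $\varepsilon\colon\tilde{Z}\to Z$ with $E=r_*\varepsilon_*(L)^{**}$. This is exactly (II), so nothing further is needed here.

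The real content is the implication $E\in\Sigma_1\Rightarrow$ (I), where I would exploit the diagram above. Since $\det(\alpha_i)=0$, each $L_i:=\ker(\alpha_i)$ is a saturated rank-one subsheaf of $E$, hence a line bundle as $S$ is smooth; writing $L_i=\mathcal{O}_S(a_i)$, the inclusions $L_1\hookrightarrow E$ and $L_2\hookrightarrow E$ (the latter obtained from $L_2\otimes K_S\hookrightarrow E\otimes K_S$) together with $\mu$-stability give $6a_i=\mu(L_i)<\mu(E)=3$, so $a_1,a_2\le 0$. On the other hand, the factorization $L_1^*\otimes\mathcal{O}_S(1)\otimes\mathcal{J}_{P_1}\to L_2\otimes K_S$ forced by $\alpha_2\circ\alpha_1=0$ is nonzero; identifying $\operatorname{Hom}(\mathcal{J}_{P_1},\mathcal{O}_S(m))\cong H^0(\mathcal{O}_S(m))$ for the codimension-two l.c.i.\ $P_1$ (its l.c.i.\ property is part of Lemma \ref{L1}), this map is a nonzero section of $\mathcal{O}_S(a_1+a_2+1)$, whence $a_1+a_2+1\ge 0$. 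Combining the two inequalities leaves only $(a_1,a_2)\in\{(0,0),(0,-1),(-1,0)\}$, so at least one $L_i\cong\mathcal{O}_S$; the resulting inclusion $\mathcal{O}_S\hookrightarrow E$ gives $H^0(S,E)\ne 0$, which is (I).

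I expect the main obstacle to be bookkeeping around the ideal sheaf rather than anything deep: one must check that a nonzero map out of $L_1^*\otimes\mathcal{O}_S(1)\otimes\mathcal{J}_{P_1}$ really cuts down to a section of $\mathcal{O}_S(a_1+a_2+1)$, i.e.\ that $P_1$ imposes no extra constraint beyond the line-bundle degree, and that the possibly degenerate sub-case $t=0$ (where $\alpha_1=\alpha_2=s$ and $L_1=L_2=\mathcal{O}_S(a)$) is handled by the same mechanism, the inequality $2a+1\ge 0$ forcing $a=0$. Once these are in place the case analysis is exhaustive and the proposition follows.
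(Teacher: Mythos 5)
Your proposal is correct and follows essentially the same route as the paper: the $\Sigma_1/\Sigma_2$ dichotomy, with $\Sigma_2$ yielding (II) directly via the spectral construction, and $\Sigma_1$ yielding (I) from stability ($a_i\le 0$) combined with the nonvanishing of $\operatorname{Hom}(L_1^*\otimes\mathcal{O}_S(1)\otimes\mathcal{J}_{P_1},\,L_2\otimes K_S)\cong H^0(\mathcal{O}_S(a_1+a_2+1))$ forcing $a_1+a_2\ge -1$. Your explicit handling of the identification across the ideal sheaf and of the degenerate subcase $t=0$ only makes explicit what the paper leaves implicit.
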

\begin{proof}
 Since $H^0(S, \text{End}^0(E) \otimes K_S) \ne 0$, there is a twisted morphism $\varphi: E \longrightarrow E \otimes K_S$.
 Let $\beta := \text{det}(\varphi)$. If $\beta$ is not a square then by the discussion earlier in Case(2), we are in case (II).
 
 If $\beta = t^2$, for some $t \in H^0(S, K_S)$, then by the discussion in Case(1), $E$ fits into exact sequences of the form,
 \[
  0 \longrightarrow L_1 \longrightarrow E \longrightarrow L_1^* \otimes \mathcal{O}_S(1) \otimes \mathcal{J}_{P_1} \longrightarrow 0
 \]
 and
 \[
  0 \longrightarrow L_2 \longrightarrow E \longrightarrow L_2^* \otimes \mathcal{O}_S(1) \otimes \mathcal{J}_{P_2} \longrightarrow 0.
 \]

 Therefore, we get a non-zero homomorphism from $L_1^* \otimes \mathcal{O}_S(1) \otimes \mathcal{J}_{P_1} \longrightarrow L_2 \otimes K_S$.
 Since $S$ is very general, $\text{Pic}(S)= \mathbb{Z}$ and hence  $L_1$ is of the form, $\mathcal{O}_S(k)$ for some integer $k$
 and $L_2$ is of the form $\mathcal{O}_S(m)$. But since 
 $E$ is stable, $k, m \le 0$. Now, the existence of a non-zero homomorphism from
 $ L_1^* \otimes \mathcal{O}_S(1) \otimes \mathcal{J}_{P_1} \longrightarrow L_2 \otimes K_S$ tells us that 
 $k, m \in \{0, -1\}$. If both $k \text{ and } m$ are $-1$, then the non-zero homomorphism, $L_1^* \otimes \mathcal{O}_S(1) \otimes \mathcal{J}_P \longrightarrow L_2 \otimes K_S$, will give a non-zero homomorphism from $\mathcal{J}_{P_1} \to \mathcal{O}_S(-1)$, which is a contradiction. 
 Thus
 $H^0(S, E) \ne 0.$

\end{proof}

\subsection{Dimension of $\Sigma(c_2)$}
Define $V$ as $V := \{[F] \in {\mathcal{M}}(H, c_2): H^0(S, F) \ne 0\}$.
\begin{proposition}\label{p1}
For $c_2 \ge 20$, $ \text{ dim }V \le 3c_2 - 21$.
 \end{proposition}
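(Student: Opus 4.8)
The plan is to parametrize the locus $V$ by the data of an extension exhibiting a nonzero section, count parameters, and bound the fiber dimensions. If $[E] \in V$, then $H^0(S,E) \neq 0$, so a nonzero section $s \in H^0(S,E)$ gives an injection $\mathcal{O}_S \hookrightarrow E$. Since $E$ is stable of rank $2$ with $\det E = \mathcal{O}_S(1)$, the section $s$ vanishes on a zero-dimensional subscheme $Z$ (it cannot vanish on a divisor, or it would destabilize $E$ via a subsheaf of degree $\geq$ that of a line through a curve), and we obtain an extension
\[
 0 \longrightarrow \mathcal{O}_S \longrightarrow E \longrightarrow \mathcal{O}_S(1) \otimes \mathcal{J}_Z \longrightarrow 0.
\]
**First I would** compute the length of $Z$ from Chern classes: $c_2(E) = c_2$ forces $\operatorname{length}(Z) = c_2$, since $c_2$ of such an extension equals the length of $Z$ (as $c_1(\mathcal{O}_S) \cdot c_1(\mathcal{O}_S(1)) = 0$). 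Thus $Z$ is a length-$c_2$ subscheme of $S$.

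**Next I would** count the dimension of the parameter space of such data. The Hilbert scheme of length-$c_2$ zero-dimensional subschemes of the surface $S$ has dimension $2c_2$. For each fixed $Z$, the bundle $E$ arises as an extension class in $\operatorname{Ext}^1(\mathcal{O}_S(1) \otimes \mathcal{J}_Z, \mathcal{O}_S)$, so I would bound the dimension of this Ext group — I expect to invoke Lemma \ref{L1} with $L = \mathcal{O}_S$, $L' = \mathcal{O}_S(1)$, giving $\dim \operatorname{Ext}^1 = d - c = c_2 - c$ where $c$ is the number of conditions $Z$ imposes on $H^0(\mathcal{O}_S(1) \otimes K_S) = H^0(\mathcal{O}_S(3))$ (after the twist by $L' \otimes L^* \otimes K_S = \mathcal{O}_S(3)$). **The key arithmetic step** is then to add these up, accounting for the $\mathbb{P}(\operatorname{Ext}^1)$ projectivization (extensions differing by scalar give isomorphic $E$) and subtracting the generic one-dimensionality of $H^0(E)$, and to verify that the total is bounded by $2c_2 + (c_2 - 1) - c \le 3c_2 - 21$; this requires showing $c \geq 20$, i.e. that a length-$c_2$ subscheme imposes at least $20$ conditions on cubics. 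Since $h^0(\mathcal{O}_S(3)) = \binom{6}{3} + (\text{correction from } S) $ and a general enough $Z$ imposes independent conditions up to the dimension $h^0(\mathcal{O}_S(3))$, the bound $c \ge 20$ should hold once $c_2 \ge 20$, because even a subscheme forced into special position imposes at least $\min(c_2, h^0(\mathcal{O}_S(3)))$ — and $h^0(\mathcal{O}_S(3)) \ge 20$.

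**The main obstacle I anticipate** is the careful treatment of the fiber dimension when $h^0(S,E) > 1$ or when the number of conditions $c$ imposed by $Z$ is not maximal — i.e. the subscheme $Z$ is in special position. In those strata the Ext group is larger (the count $d - c$ grows as $c$ drops), so I must confirm that the corresponding locus in the Hilbert scheme is correspondingly smaller, so that the product estimate still yields $\le 3c_2 - 21$. Concretely, I would stratify the Hilbert scheme by the value of $c$ and check that on the stratum where $Z$ imposes exactly $c$ conditions the dimension cost of dropping below the maximal $c$ is at least compensated; the bookkeeping here, together with ruling out the divisorial-vanishing case for the section via stability, is where the real work lies. **I expect** the clean case ($Z$ imposing independent conditions, $h^0(E) = 1$) to give exactly the bound $3c_2 - 21$, with the special strata strictly smaller, so that $\dim V \le 3c_2 - 21$ as claimed for $c_2 \ge 20$.
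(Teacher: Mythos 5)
Your setup coincides with the paper's: parametrize $V$ by pairs $(E,s)$ with $s\in\mathbb{P}(H^0(S,E))$, map to $\mathrm{Hilb}^{c_2}(S)$ via the vanishing scheme $Z$ of $s$, and bound the fibre by $\dim\mathbb{P}(\mathrm{Ext}^1(\mathcal{J}_Z(1),\mathcal{O}_S))=h^1(\mathcal{J}_Z(3))-1$, which for $Z$ imposing independent conditions on $H^0(\mathcal{O}_S(3))\cong\mathbb{C}^{20}$ gives $2c_2+(c_2-20)-1=3c_2-21$. The gap is in your treatment of the special strata. Your assertion that even a subscheme in special position imposes at least $\min(c_2,h^0(\mathcal{O}_S(3)))=20$ conditions on cubics is false: if all of $Z$ lies on a single cubic section then $h^0(\mathcal{J}_Z(3))\ge 1$ and $Z$ imposes at most $19$ conditions, and in principle far fewer. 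You then correctly identify that one must stratify by the number of conditions imposed and check that the growth of $h^1(\mathcal{J}_Z(3))$ is compensated by a drop in the dimension of the stratum, but you defer this as ``where the real work lies'' without carrying it out --- and that estimate is precisely the substance of the proof; without it the proposition is not established.

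The paper closes this gap with an incidence-variety count. Set $\triangle_i=\{Z\in\mathrm{Hilb}^{c_2}(S):h^0(\mathcal{J}_Z(3))\ge i\}$ and $T=\{(C,Z):Z\subset C\}\subset\mathbb{P}(H^0(\mathcal{O}_S(3)))\times\mathrm{Hilb}^{c_2}(S)$. The fibres of $T\to\mathbb{P}(H^0(\mathcal{O}_S(3)))$ have dimension at most $c_2$ (length-$c_2$ subschemes of a fixed curve), so $\dim T\le 19+c_2$; the fibre of $T\to\mathrm{Hilb}^{c_2}(S)$ over a point of $\triangle_i$ has dimension at least $i-1$, whence $\dim\triangle_i\le c_2+20-i\le 2c_2-i$ for $c_2\ge 20$, i.e.\ $\triangle_i$ has codimension at least $i$ in the Hilbert scheme. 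Since $h^1(\mathcal{J}_Z(3))=c_2-20+h^0(\mathcal{J}_Z(3))$ increases by exactly $i$ on $\triangle_i$, the loss and gain cancel and every stratum still contributes at most $3c_2-21$. You would need to supply an argument of this kind (or an equivalent control on the locus of subschemes failing to impose independent conditions on cubics) to complete your proof.
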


 \begin{proof}
Let $k$ be the smallest positive integer such that there is an element $E \in V \text{ with } h^0(E) = k$. Let $V_k:= \{ E \in V \text{ such that } h^0(E) = k\}$.
Let $\mathcal{N}(V_k)$ be the space of pairs,
\[
 \mathcal{N}(V_k)= \{(E, s): E \in V_k, s \in \mathbb{P}(H^0(S, E))\}.
\]
Note that for each $E \in V_k, h^0(E)$ is constant. Assuming $\mathcal{M}(H, c_2)$ is fine moduli space and $\mathcal{U}$ is the universal bundle on $\mathcal{M}(H, c_2) \times S$ one can view the space of pairs $\mathcal{N}(V_k)$ as the total space of the bundle $\mathbb{P}(p_*(\mathcal{U} \vert_{V_k}))$, where $p$ is the projection map on to $\mathcal{M}(H, c_2)$. When the moduli space is not fine, it is possible to carry out this construction locally and then can be glued together to get a global algebraic object. Let $\mathcal{N}(V)= \cup_k \mathcal{N}(V_k)$. We claim that this union is a finite union.\\
If $E \in V$, then $E$ fits into an exact sequence 
  \[
   0 \longrightarrow \mathcal{O} \longrightarrow E \longrightarrow \mathcal{J}_Z(1) \longrightarrow 0,
  \]
where $Z \in Hilb^{c_2}(S)$ satisfying Cayley-Bacharach property for sections of $\mathcal{O}_S(3)$ and $\mathcal{J}_Z$ is the ideal sheaf corresponding to $Z$. Thus $h^0(E) \le 1+h^0(\mathcal{J}_Z(1) \le 5$, which proves the claim.\\

Now consider the following diagram
 \[
\xymatrix{
& \mathcal{N}(V) \ar[r]^{p_2} \ar[d]^{p_1} & \text{Hilb}^{c_2}(S)\\
& V\\
}  
\]
Clearly $p_1$ is surjective. Thus $\text{dim }V \le \text{dim}(\mathcal{N}(V))$. On the other hand, 
\[
 \text{dim }p_2^{-1}(Z) = \text{dim }\mathbb{P}(\text{Ext}^1(\mathcal{J}_Z(1), \mathcal{O})) .
\]
Now $\text{Ext}^1(\mathcal{J}_Z(1), \mathcal{O}) = \text{Ext}^1(\mathcal{J}_Z(3), \mathcal{O}(2))$, and 
by Serre duality, we have, $ \text{Ext}^1(\mathcal{J}_Z(3), \mathcal{O}(2)) = h^1(\mathcal{J}_Z(3))$. Thus $\text{dim}(p_2^{-1}(Z)) = h^1(\mathcal{J}_Z(3))-1$.\\
For $c_2 \ge 20$, a general $Z$  of length $c_2$ has $h^0(\mathcal{J}_Z(3)) = 0$, so 
using the fact that $h^0(\mathcal{O}(3)) = 20$ and from the canonical exact sequence 
\[
 0 \longrightarrow \mathcal{J}_Z(3) \longrightarrow \mathcal{O}(3) \longrightarrow \mathcal{O}_Z(3) \longrightarrow 0,
\]
we have $h^1(\mathcal{J}_Z(3)) = c_2 - 20$. 
Now $\text{Hilb}^{c_2}(S)$ has dimension $2c_2$. Thus over a dense open subset $U \subset \text{Hilb}^{c_2}(S), \text{ dim }( p_2^{-1}(U)) \le 3c_2 -21$.  Thus to conclude the proposition, it is enough to show that over a proper closed subset $W \subset \text{Hilb}^{c_2}(S) $, the dimension of $p_2^{-1}(W)$ does not jump.

To see this, let us consider other subsets, $\triangle_i =\{ Z \in \text{Hilb}^{c_2}(S): h^0(\mathcal{J}_Z(3)) \ge i \}$.
Consider the incidence variety $T = \{(C, Z): Z \subset C\} \subset \mathbb{P}(H^0(\mathcal{O}_S(3))) \times \text{Hilb}^{c_2}(S)$
and let $\pi_1, \pi_2$ are projections. The dimension
of
$\pi_1^{-1}(C)$ is at most $c_2$.\\
The proof for this part is taken from \cite{OG1}.
 Let $\tilde{C} \in \mathbb{P}(H^0(\mathcal{O}_S(3))$. For a point $x\in \tilde{C}$ and any $m\in \mathbb{N}$ denote by $\text{Hilb}^m(\tilde{C}, x) \subset \text{Hilb}^m(\tilde{C})$ the locus consisting of subschemes supported only at $x$. Since $S \subset \mathbb{P}^3$ is smooth, the singularities of $\tilde{C}$ are planar, so that for every $c\in \tilde{C}$ the locus $\text{Hilb}^m(\tilde{C}, x)$ can be identified with a subscheme of $\text{Hilb}^m(\mathbb{A}^2, 0)$.
On the other hand, Briancon \cite{BA}  proved that in characteristic zero, $\dim \text{Hilb}^m(\mathbb{A}^2, 0) = m-1$, so $\text{Hilb}^m(\tilde{C}, x)\leq m-1$. 

Now fix a partition $\lambda = (\lambda_1,\ldots, \lambda_k)$ of $c_2$ and consider the locus $S_{\lambda}$ in $\text{Sym}^{c_2}(\tilde{C})$ consisting of cycles of the form $\sum \lambda_i x_i$ for distinct $x_i\in \tilde{C}$. By the above estimate, the fiber of Hilbert-Chow morphism over any element of $S_{\lambda}$ has dimension at most $c_2-k$. The locus $S_{\lambda}$ has codimension $c_2-k$ in $\text{Sym}^{c_2}(\tilde{C})$, so we get that its preimage has dimension at most $c_2$, summing over all $\lambda$ we get the claim.

 Therefore, we have,  $19 + c_2 \ge \text{dim }T \ge \text{dim }\pi_2^{-1}(\triangle_i) \ge \text{dim }\triangle_i +i-1$.
This implies that $\text{dim }\triangle_i$ is bounded by $c_2 +20 -i \le 2c_2-i$, as $c_2 \ge 20$ and hence the codimension of $\triangle_i$ in
$\text{Hilb}^{c_2}(S)$ is $\ge i$.
 Thus $\text{dim }\mathcal{N}(V) \le 3c_2 -21$.
\end{proof}

Let $A$ be the maximum of $\text{dim }\text{Pic}^0(\tilde{Z})$, where $\tilde{Z}$ arising as the desingularization of a 
spectral cover $Z$ associated to a non-zero section $\beta \in H^0(S, K_S^2)$. The dimension of the space of $\beta$ is 
bounded by $34$. Thus, the dimension of space of potentially obstructed bundles in Case (II) is bounded by $A + 34$. However, 
at a general choice of $\beta \in H^0(S, K_S^2)$ the spectral cover $Z$ is itself  smooth and has irregularity $0$. Thus, 
the dimension bound can be reduced to $A + 33$.

We now give an estimate for the irregularity $A$.
\begin{lemma}
$A \le 6$.
\begin{proof}
We basically mimic the calculation done in  \cite[Lemma 9.1]{SIM} with minor changes.

  Let $X$ be a very general sextic hypersurface in $\mathbb{P}^3$ and $s \in H^0(X, \mathcal{O}(4))$ be a section which is not the 
 square of a section of $\mathcal{O}_X(2)$. The number $A \le h^0( \tilde{Z}, \Omega_{\tilde{Z}}^1)$. \\
 As in \cite{SIM} Lemma 9.1, we have $p_*(\Omega_{\tilde{Z}}^1) = \Omega_X^1 \oplus \mathcal{F}$, where 
 $p: \tilde{Z} \longrightarrow X$ is the projection map and $\mathcal{F}$ is a torsion free sheaf. If $\mathcal{G}$ denotes the double
 dual of $\mathcal{F}$, then as in \cite{SIM} Lemma 9.1 we also have an exact sequence
 \[
  0 \longrightarrow \Omega_X^1 \otimes L \longrightarrow \mathcal{G} \longrightarrow \mathcal{B} \longrightarrow 0,
 \]
where $\mathcal{B}$ is a sheaf supported on the divisor $D$ of zeros of $s$ and $L$ is a line bundle isomorphic to $\mathcal{O}_X(-2)$.
From the above exact sequence we get 
\[
 0 \longrightarrow \mathcal{G} \longrightarrow \Omega_X^1(-2)(D) =\Omega_X^1(2).
\]
Since $H^0(X, \Omega_X^1) = 0$, we have 
\[
 H^0(\tilde{Z}, \Omega_{\tilde{Z}}^1) \cong H^0(X, p_*\Omega_{\tilde{Z}}^1) \cong H^0(X, \mathcal{F}) \hookrightarrow H^0(X, \mathcal{G}) \hookrightarrow H^0(X, \Omega_X^1(2)).
\]
From the canonical exact sequence,
\[
 0 \longrightarrow \Omega_{\mathbb{P}^3}^1 \longrightarrow \mathcal{O}(-1)^4 \longrightarrow \mathcal{O}_{\mathbb{P}^3} \longrightarrow 0
\]
we observe that $H^0(\Omega_{\mathbb{P}^3}^1(1)) = H^1(\Omega_{\mathbb{P}^3}^1(1)) = H^1(\Omega_{\mathbb{P}^3}^1(-4)) = 0$.
Thus, from the exact sequences
\[
 0 \longrightarrow \Omega_{\mathbb{P}^3}^1(-4) \longrightarrow \Omega_{\mathbb{P}^3}^1(2) \longrightarrow \Omega_{\mathbb{P}^3}^1(2)_{\mid{X}} \longrightarrow 0,
 \]
 \[
  0 \longrightarrow N_{X/\mathbb{P}^3}^*(2) = \mathcal{O}_X(-4) \longrightarrow \Omega_{\mathbb{P}^3}^1(2)_{\mid{X}} \longrightarrow \Omega_X^1(2) \longrightarrow 0
 \]
and using the fact that $H^1(\mathcal{O}_X(n)) =0$ we have,
$H^0(\Omega_X^1(2)) \cong H^0(\Omega_{\mathbb{P}^3}^1(2))$.
On the other hand, from the exact sequences,
\[
 0 \longrightarrow \Omega_{\mathbb{P}^3}^1(1) \longrightarrow \Omega_{\mathbb{P}^3}^1(2) \longrightarrow \Omega_{\mathbb{P}^3}^1(2)_{\mid{\mathbb{P}^2}} \longrightarrow 0
\]
and
\[
 0 \longrightarrow N_{\mathbb{P}^2/\mathbb{P}^3}^*(2)= \mathcal{O}_{\mathbb{P}^2}(1) \longrightarrow \Omega_{\mathbb{P}^3}^1(2)_{\mid{\mathbb{P}^2}} \longrightarrow \Omega_{\mathbb{P}^2}(2) \longrightarrow 0
\]
we have, $h^0(\Omega_{\mathbb{P}^3}^1(2)) = 3 + h^0(\Omega_{\mathbb{P}^2}^1(2))$.
Using similar exact sequences for the embedding of $\mathbb{P}^1$ in $\mathbb{P}^2$ one can easily show that $h^0(\Omega_{\mathbb{P}^2}^1(2)) = 3$.\\

Therefore, $h^0(\Omega_{\mathbb{P}^3}^1(2)) =6$ from which the Lemma follows.

\end{proof}

\end{lemma}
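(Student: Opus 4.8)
The plan is to bound $A$ by the number of independent holomorphic $1$-forms on $\tilde Z$. Since $\tilde Z$ is a smooth projective surface, Hodge symmetry gives $\dim \mathrm{Pic}^0(\tilde Z) = h^1(\mathcal O_{\tilde Z}) = h^0(\tilde Z, \Omega^1_{\tilde Z})$, so it suffices to prove $h^0(\tilde Z, \Omega^1_{\tilde Z}) \le 6$. The mechanism is to push $\Omega^1_{\tilde Z}$ forward along the degree-two map $p \colon \tilde Z \to S$ and decompose it under the covering involution into its invariant and anti-invariant parts, writing $p_*\Omega^1_{\tilde Z} = \Omega^1_S \oplus \mathcal F$ with $\mathcal F$ torsion free. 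Because a smooth hypersurface in $\mathbb P^3$ is regular, one has $q(S) = h^0(S,\Omega^1_S) = 0$, so the invariant summand contributes nothing and $h^0(\tilde Z, \Omega^1_{\tilde Z}) = h^0(S, \mathcal F)$.

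The heart of the argument, and the step I expect to be the main obstacle, is to gain enough control on the anti-invariant sheaf $\mathcal F$ to bound its global sections. I would pass to the reflexive hull $\mathcal G := \mathcal F^{\vee\vee}$, which is locally free since $S$ is smooth, and extract from the local structure of the double cover along its branch locus an exact sequence $0 \to \Omega^1_S \otimes L \to \mathcal G \to \mathcal B \to 0$, where $L \cong \mathcal O_S(-2)$ satisfies $p_*\mathcal O_{\tilde Z} = \mathcal O_S \oplus L$ and $\mathcal B$ is supported on the zero divisor $D$ of the defining section $\beta$, of class $\mathcal O_S(4) \cong L^{\otimes -2}$. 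Twisting by $\mathcal O_S(D) = \mathcal O_S(4)$ then produces an injection $\mathcal G \hookrightarrow \Omega^1_S \otimes L(D) = \Omega^1_S(-2)(4) = \Omega^1_S(2)$, whence $h^0(S,\mathcal F) \le h^0(S,\mathcal G) \le h^0(S, \Omega^1_S(2))$. Pinning down the precise shape of this sequence from the geometry of the spectral cover is the delicate point; I would model it on the analysis carried out for the quintic, adapting all twists to $K_S \cong \mathcal O_S(2)$.

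It then remains to compute $h^0(S, \Omega^1_S(2))$, which I would do by descending from $\mathbb P^3$. The conormal sequence of $S \subset \mathbb P^3$ twisted by $\mathcal O_S(2)$ reads $0 \to \mathcal O_S(-4) \to \Omega^1_{\mathbb P^3}(2)\big|_S \to \Omega^1_S(2) \to 0$, and the restriction sequence reads $0 \to \Omega^1_{\mathbb P^3}(-4) \to \Omega^1_{\mathbb P^3}(2) \to \Omega^1_{\mathbb P^3}(2)\big|_S \to 0$. Using $H^1(\mathcal O_S(n)) = 0$ together with the Bott vanishing $H^0(\Omega^1_{\mathbb P^3}(-4)) = H^1(\Omega^1_{\mathbb P^3}(-4)) = 0$, both restriction maps are isomorphisms on $H^0$, so $h^0(S,\Omega^1_S(2)) = h^0(\mathbb P^3, \Omega^1_{\mathbb P^3}(2))$. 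Finally the Euler sequence $0 \to \Omega^1_{\mathbb P^3}(2) \to \mathcal O_{\mathbb P^3}(1)^{\oplus 4} \to \mathcal O_{\mathbb P^3}(2) \to 0$, whose induced map on global sections is multiplication by the coordinates and hence surjective, gives $h^0(\Omega^1_{\mathbb P^3}(2)) = \dim H^0(\mathcal O(1))^{\oplus 4} - \dim H^0(\mathcal O(2)) = 16 - 10 = 6$. Combining the three steps yields $A \le h^0(S,\mathcal F) \le 6$. (As a cross-check, the same count can be reached by the author's route of restricting successively to $\mathbb P^2$ and $\mathbb P^1$, but the Euler sequence makes the computation immediate.)
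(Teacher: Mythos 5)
Your proposal follows the paper's argument essentially step for step: the bound $A \le h^0(\tilde Z,\Omega^1_{\tilde Z})$, the pushforward decomposition $p_*\Omega^1_{\tilde Z}=\Omega^1_S\oplus\mathcal F$, the exact sequence $0\to\Omega^1_S\otimes L\to\mathcal G\to\mathcal B\to 0$ imported from the quintic analysis, the resulting injection $\mathcal G\hookrightarrow\Omega^1_S(2)$, and the reduction $h^0(\Omega^1_S(2))=h^0(\Omega^1_{\mathbb P^3}(2))$ via the conormal and restriction sequences are all the same. The only (harmless) divergence is the final count, where you obtain $h^0(\Omega^1_{\mathbb P^3}(2))=16-10=6$ directly from the twisted Euler sequence instead of restricting successively to $\mathbb P^2$ and $\mathbb P^1$; both routes give $6$.
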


Thus, we have the following Proposition:
\begin{proposition}
 The dimension of the locus of potentially obstructed bundles of type (II) is $\le 39$.
\end{proposition}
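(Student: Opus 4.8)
The plan is to parametrize the type (II) locus by the data $(\beta, L)$ supplied by Proposition \ref{Prop1}(II) and to bound its dimension by a stratified fiber count over the space of sections $\beta$. Every type (II) bundle is of the form $E = r_*\varepsilon_*(L)^{**}$ for some non-square $\beta \in H^0(S, K_S^2)$ and some line bundle $L$ on the desingularization $\tilde{Z}$ of the associated spectral cover, so I would introduce the parameter space $\mathcal{P} = \{(\beta, L)\}$ with its natural morphism $\mathcal{P} \to \overline{\mathcal{M}}(H, c_2)$, $(\beta, L) \mapsto E$. Since this morphism surjects onto the type (II) locus, $\dim(\text{type II}) \le \dim \mathcal{P}$, and it suffices to bound the latter.

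First I would pin down the base. From $K_S \cong \mathcal{O}_S(2)$ we get $K_S^2 \cong \mathcal{O}_S(4)$, and the restriction sequence on $\mathbb{P}^3$ together with $H^0(\mathcal{O}_{\mathbb{P}^3}(-2)) = 0$ gives $h^0(S, K_S^2) = h^0(\mathbb{P}^3, \mathcal{O}(4)) = 35$. Hence the sections $\beta$, taken up to scale, form a space of dimension $34$, serving as the base of the fibration $\mathcal{P} \to \mathbb{P}(H^0(K_S^2))$. The fiber over a given $\beta$ records the choice of $L$: once the discrete invariants of $E$ (equivalently the N\'eron--Severi class of $L$) are fixed, $L$ varies in a torsor under $\text{Pic}^0(\tilde{Z})$, whose dimension is the irregularity $q(\tilde{Z}) \le A \le 6$ by the preceding Lemma.

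The key to reaching $39$ rather than the naive $34 + 6 = 40$ is to stratify by irregularity. For general $\beta$ the branch divisor $\text{zero}(\beta) \in |\mathcal{O}_S(4)|$ is smooth by Bertini, so the spectral cover $Z$ is itself smooth and $\tilde{Z} = Z$; the standard double-cover formula $r_*\mathcal{O}_Z = \mathcal{O}_S \oplus \mathcal{O}_S(-2)$ then yields $q(Z) = h^1(\mathcal{O}_S) + h^1(\mathcal{O}_S(-2)) = 0$, using that $S \subset \mathbb{P}^3$ is regular and $H^1(\mathcal{O}_S(n)) = 0$ for all $n$. Consequently the locus $B' \subset \mathbb{P}(H^0(K_S^2))$ on which $q(\tilde{Z}) \ge 1$ is a proper closed subset, of dimension $\le 33$. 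Over the dense open complement the fibers are finite, so that stratum of $\mathcal{P}$ has dimension $34$; over $B'$ the fibers have dimension $\le A \le 6$, contributing at most $33 + 6 = 39$. Therefore $\dim \mathcal{P} \le \max(34, 39) = 39$, and the type (II) locus has dimension $\le 39$.

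The genuine obstacle lives entirely in the already-established Lemma, namely the bound $A \le 6$ on the irregularity of the desingularized spectral cover; the only additional ingredient for the Proposition is the codimension-one improvement, which rests on the generic smoothness (hence irregularity $0$) of the cover. It is precisely this genericity that confines the irregularity-jumping locus to dimension $\le 33$ and thereby upgrades the crude estimate $A + 34$ to the sharp $A + 33 = 39$.
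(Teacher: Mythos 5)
Your argument is correct and is essentially the paper's own: both parametrize the type (II) locus by the pair $(\beta, L)$, bound the base of sections $\beta$ by $34$, bound the fibre by the irregularity $A \le 6$ of the desingularized spectral cover, and recover $39$ (rather than $40$) by observing that a general $\beta$ gives a smooth cover of irregularity $0$, so the irregularity-jumping locus has dimension at most $33$. Your write-up merely makes explicit the double-cover computation $r_*\mathcal{O}_Z = \mathcal{O}_S \oplus \mathcal{O}_S(-2)$ behind the genericity claim, which the paper leaves implicit.
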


On the other hand, by the Proposition \ref{Prop1}, $\Sigma_1$ is contained in $V$ and by Proposition \ref{p1}, 
$\text{dim}(V) \le 3c_2 -21$  for $c_2 \ge 20$ and if $c_2 \ge 20$ then $3c_2 -21 \ge 39$.
Thus, the dimension of $\Sigma(c_2)$ is less than equal to $3c_2 -21$ for $c_2 \ge 20$. Again, by \cite[Corollary 3.1]{SIM2}, 
$\Sigma_1$ has dimension at least $3c_2 -21$ and hence it has dimension equal to $3c_2 -21$. Thus, we have the following proposition: 
\begin{proposition}\label{p2}
 The dimension of $\Sigma(c_2)$ is equal to $3c_2 -21$ for $c_2 \ge 20$.
\end{proposition}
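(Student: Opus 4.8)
The plan is to combine the upper bound on $\dim \Sigma(c_2)$ already assembled in this section with a matching lower bound coming from the explicit construction of $\Sigma_1$. By Proposition \ref{Prop1}, every potentially obstructed stable bundle falls into either Case (I) or Case (II), so $\Sigma(c_2) \subseteq V \cup (\text{Case (II) locus})$. Proposition \ref{p1} gives $\dim V \le 3c_2 - 21$, and the preceding Proposition bounds the Case (II) locus by $39$. For $c_2 \ge 20$ we have $3c_2 - 21 \ge 39$, so the two bounds combine to yield $\dim \Sigma(c_2) \le 3c_2 - 21$. Thus the entire content of the upper bound is already in place, and I would simply cite Propositions \ref{Prop1}, \ref{p1}, and the Case (II) Proposition in sequence, noting the numerical inequality $3c_2 - 21 \ge 39$.

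The remaining work is the reverse inequality $\dim \Sigma(c_2) \ge 3c_2 - 21$, and here the natural route is to produce enough bundles in $\Sigma_1 \subseteq \Sigma(c_2)$. Since $\Sigma_1 \subseteq V$, and $V$ itself has dimension $3c_2 - 21$, it suffices to show that $\Sigma_1$ is dense in (or at least equidimensional with) $V$, or more directly to exhibit a $(3c_2-21)$-dimensional family of such bundles. The cleanest way is to invoke \cite[Corollary 3.1]{SIM2}, which on the analogous quintic or general surface setup guarantees exactly that $\Sigma_1$ has dimension at least $3c_2 - 21$; the construction there builds, from a length-$c_2$ subscheme $P$ satisfying the Cayley--Bacharach condition of Lemma \ref{L1}, an extension whose associated bundle is potentially obstructed, and the parameter count of such data gives the lower bound. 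I would therefore state the lower bound as a consequence of that corollary applied to the sextic surface, checking only that the numerical hypotheses (the vanishing $h^1(L' \otimes L^* \otimes K_S) = 0$ and the count of conditions $c$) carry over with $K_S \cong \mathcal{O}_S(2)$.

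Squeezing the two bounds then forces $\dim \Sigma(c_2) = 3c_2 - 21$ for $c_2 \ge 20$, which is the assertion of Proposition \ref{p2}.

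The step I expect to be the genuine obstacle is verifying that the lower-bound construction of \cite[Corollary 3.1]{SIM2} transfers verbatim to the sextic. On a quintic one has $K_S \cong \mathcal{O}_S(1)$, whereas here $K_S \cong \mathcal{O}_S(2)$, so the line bundle $L' \otimes L^* \otimes K_S$ appearing in the Cayley--Bacharach / Ext$^1$ dimension count of Lemma \ref{L1} is twisted differently, and the relevant cohomology vanishing and the number $c$ of conditions must be recomputed. The rest of the argument is a formal squeeze, but I would want to confirm that the dimension tally $d - c$ from Lemma \ref{L1}, summed over the choices of $P$ in $\mathrm{Hilb}^{c_2}(S)$, still produces precisely $3c_2 - 21$ rather than some shifted quantity, since it is this exact match with the upper bound from $V$ that makes the equality in Proposition \ref{p2} possible.
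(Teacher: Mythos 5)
Your proposal matches the paper's proof essentially verbatim: the upper bound is obtained by combining Proposition \ref{Prop1}, Proposition \ref{p1}, and the bound of $39$ on the type (II) locus together with the inequality $3c_2-21\ge 39$ for $c_2\ge 20$, and the lower bound is a direct citation of \cite[Corollary 3.1]{SIM2} giving $\dim\Sigma_1\ge 3c_2-21$. The caveat you raise about transferring the \cite{SIM2} computation from the quintic to the sextic is reasonable, but the paper itself cites the corollary without further verification, so your argument is the same as the author's.
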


\begin{definition}
 A closed subset $X \subset {\mathcal{M}}(H, c_2)$ is called good if every irreducible component of $X$ contains a point
 $[E]$ with $H^2(S, \text{End}^0 E) = 0$, where $\text{End}^0E$ denotes the traceless endomorphisms of $E$.
\end{definition}
 Now we have the following theorem:
\begin{theorem}\label{thm1}
 The moduli space $\mathcal{M}(H, c_2)$ is good for $c_2 \ge 20$ 
\end{theorem}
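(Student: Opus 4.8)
The plan is to reduce the statement to a dimension count built on the estimates already in hand. First I would record the basic dictionary: by Serre duality $H^2(S, \text{End}^0 E) \cong H^0(S, \text{End}^0 E \otimes K_S)^*$, so the obstruction space $\text{Obs}(E)$ vanishes exactly when $[E] \notin \Sigma(c_2)$. Thus a point $[E]$ is \emph{good} (that is, unobstructed) precisely when $E \notin \Sigma(c_2)$, and to prove that $\mathcal{M}(H, c_2)$ is good it is enough to show that $\Sigma(c_2)$ contains no entire irreducible component. Since $h^2(\text{End}^0 E)$ is upper semicontinuous in flat families, $\Sigma(c_2)$ is closed; hence once we know it cannot fill a component, it must meet each component in a proper closed subset, leaving a dense open locus of good points on every component.

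The key input is the standard deformation-theoretic lower bound on the dimension of components. At a point $[E]$ the Zariski tangent space is $H^1(S, \text{End}^0 E)$ and the obstructions lie in $H^2(S, \text{End}^0 E)$, so by Kuranishi theory the moduli space is locally the zero locus of $h^2(\text{End}^0 E)$ functions on a smooth germ of dimension $h^1(\text{End}^0 E)$. Consequently every component through $[E]$ has dimension at least $h^1(\text{End}^0 E) - h^2(\text{End}^0 E)$. Stability makes $E$ simple, so $h^0(\text{End}^0 E) = 0$, whence this lower bound equals $-\chi(\text{End}^0 E)$, which by Riemann-Roch is the expected dimension $4c_2 - 39$. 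Therefore every irreducible component of $\mathcal{M}(H, c_2)$ has dimension at least $4c_2 - 39$.

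It then remains only to compare dimensions. By Proposition \ref{p2}, $\dim \Sigma(c_2) = 3c_2 - 21$ for $c_2 \ge 20$, while every component has dimension at least $4c_2 - 39$. Since $(4c_2 - 39) - (3c_2 - 21) = c_2 - 18 > 0$ once $c_2 \ge 20$ (indeed already for $c_2 > 18$), the closed set $\Sigma(c_2)$ is strictly lower-dimensional than any component and so cannot contain one. Each component therefore carries a point $[E]$ with $H^2(S, \text{End}^0 E) = 0$, which is exactly the assertion that $\mathcal{M}(H, c_2)$ is good. I do not anticipate a genuine obstacle at this stage: the real work is the estimate $\dim \Sigma(c_2) = 3c_2 - 21$ already performed in Propositions \ref{p1} and \ref{p2}, and the only point needing a quick check is that $-\chi(\text{End}^0 E)$ really equals the stated $4c_2 - 39$.
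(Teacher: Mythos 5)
Your argument is correct and is essentially the paper's own proof: the paper likewise combines the Kuranishi lower bound (every component has dimension at least the expected dimension $4c_2-39$) with Proposition \ref{p2}'s estimate $\dim\Sigma(c_2)=3c_2-21$ to conclude that the obstructed locus has positive codimension in every component for $c_2\ge 20$. You simply spell out the standard supporting facts (closedness of $\Sigma(c_2)$ by semicontinuity, simplicity of stable bundles, the Riemann--Roch computation) that the paper leaves implicit.
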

\begin{proof}
 The expected dimension of the moduli space is $4c_2-39$. Therefore, by Proposition \ref{p2}  the locus of potentially obstructed bundles has codimension
 at least $1$ for $c_2 \ge 20$. Thus, each component of  the moduli space contains a point $[E]$ with $H^2(S, \text{End}^0E) = 0$.  Hence,  $\mathcal{M}(H, c_2)$ is good. 
\end{proof}
\begin{remark}\label{RZ}
Note that, since $\mathcal{M}(H, c_2)$ is good for $c_2 \ge 20$, the moduli space $\overline{\mathcal{M}}(H, c_2)$ is also good for $c_2 \ge 27$,  \cite[Theorem 7.2]{DS}.
\end{remark}

\section{Non emptyness of certain HN-strata}\label{S2}
In this section, using O'Grady's method used in \cite{OG}, we prove the non-emptyness of certain HN-strata of a certain closed subset $X_C $ of an irreducible component $X \subset \mathcal{M}(H, c_2)$.
Let $C \in |H|$ be a smooth curve. Then genus $g_C= \frac{1}{2}(C.K_S+C.C) +1=10$. Let $X$ be an irreducible component of $\mathcal{M}(H, c_2)$. Let $\overline{X}$ be the closure of $X$ in $\overline{\mathcal{M}}(H, c_2)$.
Assume that the restriction of the torsion free sheaves in $\overline{X}$ to $C$ are locally free.
\begin{proposition}\label{P1}
   If $c_2 \ge 17$, then there is a torsion free sheaf $[E] \in \overline{X}$ such that
  $E_{ \mid_C}$ is not semistable.
  \end{proposition}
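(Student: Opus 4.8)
The plan is to control the restriction of bundles in $X$ to the fixed curve $C$ and to show that the semistable restrictions cannot fill up the whole component. Since $E|_C$ has rank $2$ and degree $c_1(E)\cdot C=H^2=6$ on the genus-$10$ curve $C$, its slope is $3$, and it fails to be semistable exactly when it carries a sub-line bundle of degree $\ge 4$. Because semistability of the restriction is an open condition in families, it suffices to show that the open locus $W:=\{[E]\in X : E|_C\text{ is semistable}\}$ is a proper subset of $X$. I would study this through the restriction morphism $\rho\colon W\to M_C$, where $M_C$ is the moduli space of semistable rank-$2$ bundles on $C$ with fixed determinant $\mathcal{O}_C(1)=H|_C$, a smooth variety of dimension $3(g_C-1)=27$.

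The hypothesis $c_2\ge 17$ is exactly the threshold making $\dim X$ exceed $\dim M_C$. By Theorem \ref{thm1} the component $X$ is good, hence generically smooth of dimension $4c_2-39$. A fibre of $\rho$ is the family of bundles on $S$ with a prescribed restriction to $C$, whose first-order deformations are those vanishing on $C$, i.e. $H^1(S,\text{End}^0E\otimes\mathcal{O}_S(-C))=H^1(S,\text{End}^0E(-H))$. Riemann--Roch on $S$, using $K_S=\mathcal{O}_S(2)$, $H^2=6$ and $\chi(\mathcal{O}_S)=11$, gives $\chi(\text{End}^0E(-H))=66-4c_2$, and $\mu$-stability of $E$ forces $H^0(\text{End}^0E(-H))=0$; hence the fibre has expected dimension $4c_2-66$. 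Adding $27$ recovers $4c_2-39=\dim X$, so the count is exactly borderline, and $4c_2-39>27$ if and only if $c_2\ge 17$.

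To make the count strict I would establish two things. First, that the fibres of $\rho$ do not exceed their expected dimension, which amounts to the vanishing $H^2(S,\text{End}^0E(-H))=0$ for a general $[E]\in W$ (equivalently $H^0(S,\text{End}^0E(3H))=0$, by Serre duality). Second, and this is where the real work lies, that $\rho$ is not dominant, so that $\dim\overline{\rho(W)}\le 26$; the two together yield $\dim W\le 26+(4c_2-66)<4c_2-39=\dim X$ and force $W\neq X$. The natural tool for both points, and for producing the destabilizing bundle directly, is the elementary modification along $C$: a degree-$(\le 2)$ quotient line bundle $E|_C\twoheadrightarrow N$ gives a locally free subsheaf $E'=\ker(E\to i_*N)$ with $c_1(E')=0$ and $c_2(E')=c_2+\deg N-6$, so that $E$ arises as an extension of $i_*N$ by $E'$, of the type governed by Lemma \ref{L1}. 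Estimating the dimension of the family of such data $(E',N,e)$ and comparing with $\dim X$ is what I expect to pin down the image of $\rho$ and produce an $[E]\in X$ with $E|_C$ unstable. The main obstacle is precisely this non-dominance (equivalently, the strict dimension bound on $W$): the naive Euler-characteristic count is inconclusive on its own, and the hypothesis $c_2\ge 17$ must enter through the fibre estimate above.
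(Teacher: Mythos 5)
Your strategy is genuinely different from the paper's, but as written it does not close: the argument reduces to two claims that you identify yet do not establish, and both are serious. Your dimension count is exactly borderline, as you note: $\dim M_C+\chi$-expected fibre dimension $=27+(4c_2-66)=4c_2-39=\dim X$. To get a strict inequality you need \emph{both} (i) the fibre bound, which requires $H^2(S,\mathrm{End}^0E(-H))=0$, i.e.\ $H^0(S,\mathrm{End}^0E(3H))=0$ for a general $[E]$ in the semistable locus $W$, and (ii) non-dominance of $\rho$. Neither is routine. For (i), note that $\chi(\mathrm{End}^0E(3H))=66-4c_2<0$ in the relevant range, so there is no numerical obstruction to sections; proving this vanishing would require an analysis at least as delicate as the paper's Section 2 treatment of $H^0(\mathrm{End}^0E(2H))$, and it is by no means clear it holds. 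For (ii), you give no argument and no mechanism; for large $c_2$ one would if anything expect restriction to $C$ to be dominant onto $\mathcal{M}(C,H|_C)$, so this is not a gap one can wave at. Since $W$ is open in $X$, even $\dim W=\dim X$ is compatible with $W=X$, so without strictness the approach proves nothing.

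The paper avoids all of this with a short global argument: assuming $E|_C$ is semistable for \emph{every} $[E]\in X$, one gets a morphism $\rho\colon X\to\mathcal{M}(C,H|_C)$ with $\dim X\ge 4c_2-39\ge 29>27=\dim\mathcal{M}(C,H|_C)$, whence $(\rho^*\Theta)^{\dim X}=0$ for the theta divisor $\Theta$; this contradicts the positivity statement of O'Grady (Proposition 2.18 of \cite{OG}) for the determinant class on $X$. That is the missing idea in your proposal: rather than bounding $\dim W$, one uses the positivity of a Donaldson-type class to rule out the restriction map being everywhere defined. If you want to salvage your approach you would have to supply (i) and (ii) above, and I do not see how to do either without essentially reproving O'Grady's result.
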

  
  \begin{proof}
  Let $\mathcal{M}(C, H_{\mid_C})$ be the moduli space of rank $2$ semistable bundles on $C$ with determinant $H_{\mid_C}$. If 
  for every $E$, $E_{\mid_C}$ is semistable then we have a restriction map
  \[
   \rho: \overline{X} \longrightarrow \mathcal{M}(C, H_{\mid_C}).
  \]
Now the dimension of $\mathcal{M}(C, H_{\mid_C})$ is $3g_C -3= 27$. But for $c_2 \ge 17$ the dimension of $X \ge 29$. Thus,
\[
 (\rho^*\Theta)^{\text{dim}\overline{X}} =0,
\]
where $\Theta$ denotes the theta divisor on $\mathcal{M}(C, H_{\mid_C})$, a contradiction to the Proposition 2.18 of \cite{OG}.
\end{proof}

\begin{remark}\label{IR1}
  Note that if all the sheaves in $\overline{X}$ are locally free then $\overline{X}=X$ and the restriction of the sheaves in $X$ to $C$ are locally free.
\end{remark}
Set $X_C = \{[E] \in \overline{X}: E_{\mid_C} \text{ is not semistable } \}$.  By \cite[Proposition 1.13]{OG}, $\text{dim}(X_C) \ge 4c_2 -39 -10= 4c_2 -49$.
Let $E \in X_C$. Since $E_{\mid_C}$ is not semistable, we have a destabilizing sequence,
\begin{equation}\label{A}
0 \to L \to E_{\mid_C} \to Q \to 0,
\end{equation}
where $L$ and $Q$ are line bundles on $C$ and we have $\text{deg}(L) > \mu(E_{\mid_C})=3$.

 Then there is a  stratification of the closed subset $X_C:= \{E \in \overline{X}: E_{\mid_C} \text{ is not semi-stable }\}$ as $X_C = \cup X_{C,d}$, where \\
 $X_{C, d}: = \{E \in X_C:  \text{ such that the degree of the line subbundle in the maximal destabilizing sequence is } d\} $. Clearly, $X_{C, d}$ is non-empty only if $d \ge 4$. We will prove that, under certain hypothesis,  $X_{C, d} $
 is non-empty for $4 \le d \le 7$. 
 \begin{lemma}\label{LA}
 If $H^0(C, L \otimes Q^*) =0$, where $L$ and $Q$ are as in exact sequence \ref{A}, then any non-trivial extension of the form,
 \[
 0 \to L \to V \to Q \to 0
 \]
 is simple.
 \end{lemma}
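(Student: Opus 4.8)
The plan is to show that every endomorphism of $V$ is a scalar, exploiting the two $\text{Hom}$-vanishings that the hypothesis and the destabilizing inequality provide. First I would record the relevant vanishings between $L$ and $Q$. Since $L$ destabilizes $E_{\mid_C}$ we have $\deg L > \mu(E_{\mid_C}) = 3$, and as $\deg L + \deg Q = \deg H_{\mid_C} = 6$ this forces $\deg Q < 3 < \deg L$. Consequently $\text{Hom}(L,Q) = H^0(C, L^* \otimes Q) = 0$, because $L^* \otimes Q$ has negative degree, while $\text{Hom}(Q,L) = H^0(C, L \otimes Q^*) = 0$ is exactly the hypothesis.

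Next, given $\phi \in \text{End}(V)$, I would use the first vanishing to see that $\phi$ preserves the sub-line-bundle $L$: the composite $L \hookrightarrow V \xrightarrow{\phi} V \twoheadrightarrow Q$ lies in $\text{Hom}(L,Q) = 0$, so $\phi(L) \subseteq L$. Hence $\phi$ restricts to a scalar $\lambda \in \text{End}(L) = \mathbb C$ on $L$, and the twisted endomorphism $\psi := \phi - \lambda\,\text{Id}_V$ vanishes on $L$. By the universal property of the cokernel $Q = V/L$, the map $\psi$ then factors as $V \twoheadrightarrow Q \xrightarrow{\tilde\psi} V$ for a unique $\tilde\psi \in \text{Hom}(Q,V)$.

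It therefore suffices to prove $\text{Hom}(Q,V) = 0$, and this is the step where the non-triviality of the extension is essential. Applying $\text{Hom}(Q,-)$ to the sequence $0 \to L \to V \to Q \to 0$ yields the exact sequence $0 \to \text{Hom}(Q,L) \to \text{Hom}(Q,V) \to \text{Hom}(Q,Q) \xrightarrow{\delta} \text{Ext}^1(Q,L)$. The left-hand term vanishes by the first step, and the connecting homomorphism $\delta$ sends $\text{Id}_Q$ to the class $e \in \text{Ext}^1(Q,L)$ of the extension. Since the extension is non-trivial, $e \neq 0$, so $\delta$ is injective on $\text{Hom}(Q,Q) = \mathbb C \cdot \text{Id}_Q$; combined with $\text{Hom}(Q,L) = 0$ this gives $\text{Hom}(Q,V) = 0$. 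Hence $\tilde\psi = 0$, so $\psi = 0$ and $\phi = \lambda\,\text{Id}_V$, proving that $V$ is simple.

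The argument is essentially formal once the two $\text{Hom}$-vanishings are in place; the only genuine input is the non-triviality of the extension, which is precisely what makes $\delta$ injective and forces $\text{Hom}(Q,V) = 0$. I expect the main (minor) point to watch is confirming the degree inequality $\deg L > \deg Q$, so that $\text{Hom}(L,Q)$ also vanishes — without it $\phi$ need not preserve $L$ and the reduction to $\text{Hom}(Q,V)$ would break down.
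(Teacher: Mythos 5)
Your proof is correct, and it takes a genuinely different route from the paper's. The paper argues by contradiction: it assumes a non-scalar endomorphism $\phi$ with nontrivial kernel and image exists, shows $\phi$ must be nilpotent (else $V$ would split), and then runs a degree comparison between $\ker(\phi)$, $\mathrm{Im}(\phi)$ and $L$, $Q$ through the two commutative diagrams to force $\mathrm{Im}(\phi)\cong L$ and $\ker(\phi)\cong Q$, which contradicts either the non-triviality of the extension or $H^0(C,L\otimes Q^*)=0$. You instead compute $\mathrm{End}(V)$ directly: the vanishing $\mathrm{Hom}(L,Q)=0$ (from $\deg L>3>\deg Q$) shows every endomorphism preserves $L$ and is scalar there, and the long exact sequence of $\mathrm{Hom}(Q,-)$ together with the fact that the connecting map sends $\mathrm{Id}_Q$ to the nonzero extension class gives $\mathrm{Hom}(Q,V)=0$, killing the residual map. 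Your argument is cleaner and more robust: it isolates exactly where each hypothesis enters (the destabilizing inequality gives $\mathrm{Hom}(L,Q)=0$, the stated hypothesis gives $\mathrm{Hom}(Q,L)=0$, and non-triviality of the extension gives injectivity of the connecting map), and it avoids the paper's somewhat loosely justified steps such as the claim that $\phi^2\neq 0$ forces $V=\ker(\phi)\oplus\mathrm{Im}(\phi)$. The paper's approach, on the other hand, stays closer to the geometry of sub- and quotient line bundles of $V$ and generalizes more readily to situations where one reasons about the Harder--Narasimhan filtration directly.
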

 \begin{proof}
 On the contrary, suppose $V$ is not simple. Then we have an endomorphism,
 $\phi: V \to V$ with $\text{ker}(\phi)$ and $\text{Im}(\phi)$ are  non-zero. If $\phi^2 \ne 0$, then  $V$ is decomposable as $V =\text{ker}(\phi) \oplus \text{Im}(\phi)$ and one could see that the extension can not be non-trivial.  Thus $\phi^2 = 0$. In other words, $\phi$ is nilpotent. Now  we have the following diagrams:

 \begin{equation}\label{C}
 \xymatrix{
 0 \ar[r] & L \ar[r] & V \ar[r]  \ar[d] & Q \ar[r] & 0 \\
  0 \ar[r] & \text{ker}(\phi) \ar[r] & V \ar[r] & \text{Im}(\phi) \ar[r] & 0.
 }
 \end{equation}  
 and
 \begin{equation}\label{B}
 \xymatrix{
 0 \ar[r] & \text{Im}(\phi) \ar[r] & V \ar[d] \ar[r] & \text{ker}(\phi) \ar[r] & 0 \\
 0 \ar[r] & L \ar[r] & V \ar[r] & Q \ar[r] & 0.
 }
 \end{equation} If $\text{deg}(\text{ker}(\phi)) < \text{deg}(Q)$ then 
 $\text{deg}(\text{Im}(\phi)) > \text{deg}(L)$  and  $\text{Hom}(\text{Im}(\phi), Q)= 0$ . Thus, from diagram \ref{B}we have,  $ \text{Hom}(\text{Im}(\phi), L) \ne 0$, a contradiction as $\text{deg}(\text{Im}(\phi)) > \text{deg}(L)$.  Therefore, $\text{deg}(\text{ker}(\phi)) \ge \text{deg}(Q)$. Similarly, if  $\text{deg}(\text{ker}(\phi)) > \text{deg}(Q)$, then $\text{deg}(\text{Im}(\phi)) < \text{deg}(L)$ and. Thus $\text{Hom}(L, \text{Im}(\phi))= 0= \text{Hom}(\text{ker}(\phi), Q)$. Thus we get a non-zero morphism $\text{ker}(\phi) \to L$ and a non-zero morphism $L \to \text{ker}(\phi)$. Thus  we have $L= \text{ker}(\phi)$. Hence $\text{Im}(\phi)= Q$.  Then from the diagram \ref{B},  we have either $V$ is a trivial extension or $H^0(C, L \otimes Q^*) \ne 0$, a contradiction.   
  \end{proof}
\begin{proposition}\label{P2}
Suppose  there exist an $E \in X_C$ with a line subbundle $L$ of degree $d-1$ such that $H^0(C, L \otimes {(E/L)}^*) = 0$ and  $L$ destabilizes $E$.  Then  we have $X_{C, d}$ is non-empty. 
\end{proposition}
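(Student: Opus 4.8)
The plan is to produce an element of $X_{C,d}$ out of the given $E \in X_C$ by a single elementary (Hecke) modification of $E$ at one point of $C$, to correct the second Chern class by a compensating modification away from $C$, and finally to check that the resulting bundle is $\mu$-stable and lies in the same component $X$. Write $\iota : C \hookrightarrow S$ and set $Q := E_{\mid C}/L$, so that on $C$ we have $0 \to L \to E_{\mid C} \to Q \to 0$ with $\deg L = d-1 > 3$ and $\deg Q = 7-d$; recall $\deg(E_{\mid C}) = H\cdot H = 6$ and that $N_C = \mathcal{O}_S(C)_{\mid C}$ has degree $6$.

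First I would fix a point $p \in C$ and form the lower modification $\tilde E := \ker(E \to \mathbb{C}_p)$, where $E \to \mathbb{C}_p$ is the composite $E \to E_p \to E_p/L_p$ of restriction to the fibre at $p$ with the quotient by the line $L_p \subset E_p$. Since the cokernel is a skyscraper, $c_1(\tilde E) = H$ and $c_2(\tilde E) = c_2 + 1$. Restricting $0 \to \tilde E \to E \to \mathbb{C}_p \to 0$ to $C$ and using $\mathcal{T}or_1^{\mathcal{O}_S}(\mathbb{C}_p,\mathcal{O}_C) = \mathbb{C}_p$ (valid since $p \in C$) yields $0 \to \mathbb{C}_p \to \tilde E_{\mid C} \to G \to 0$ with $G = \ker(E_{\mid C} \to Q_p) \supset L$ and $G/L \cong Q(-p)$. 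A local computation at $p$ shows that, with the modification taken in the direction of $L$, the subsheaf $L \subset \tilde E_{\mid C}$ is no longer saturated: its saturation is $L(p)$, of degree $d$. Thus $\tilde E_{\mid C}$ acquires a sub-line-bundle of degree $d$.

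Next I would restore the Chern class by choosing a point $q \in S \setminus C$ together with a one-dimensional quotient of $E_q$, and form the upper modification $E'' \supset \tilde E$ with $E''/\tilde E = \mathbb{C}_q$, so that $c_1(E'') = H$, $c_2(E'') = c_2$, and $E''_{\mid C} = \tilde E_{\mid C}$ because $q \notin C$. It then remains to verify three points. (i) $E''$ is $\mu$-stable: since $\mathrm{Pic}(S) = \mathbb{Z}H$, any destabilizing sub-line-bundle is some $\mathcal{O}_S(k)$ with $k \ge 1$, equivalently $H^0(E''(-1)) \ne 0$, which I would exclude for a generic choice of $q$ and quotient direction, exactly as $\mu$-stability is preserved under generic elementary transformations of the $\mu$-stable bundle $E$. (ii) The maximal destabilizing degree of $E''_{\mid C}$ is exactly $d$: here the hypothesis $H^0(C, L \otimes (E/L)) = 0$, together with Lemma \ref{LA}, forces the extension $0 \to L(p) \to \tilde E_{\mid C} \to Q' \to 0$ to be simple and rules out a sub-line-bundle of degree $> d$, so $E'' \in X_{C,d}$ and not a deeper stratum. (iii) $E'' \in X$: I would place $E$ and $E''$ in one and the same irreducible family of bundles obtained from $E$ by a lower modification at a variable point of $C$ followed by an upper modification at a variable point of $S$; this family has an irreducible parameter space and specializes to $E$ itself when the two modifications cancel, so every $\mu$-stable member, in particular $E''$, lies in the component $X$.

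The main obstacle is step (iii), the membership $E'' \in X$: I must exhibit a genuine flat family inside $\mathcal{M}(H, c_2)$ connecting $E''$ to $E$, all of whose members are $\mu$-stable with $(c_1,c_2) = (H, c_2)$, and argue that the parameter space is connected with $E$ in its closure. A secondary difficulty is the local verification in the second paragraph that the modification in the $L$-direction really saturates to $L(p)$ (rather than leaving $L$ saturated of degree $d-1$), and the precise use of $H^0(C, L \otimes (E/L)) = 0$ in step (ii) to pin the stratum index at exactly $d$.
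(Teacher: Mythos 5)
Your strategy (a direct construction by Hecke modifications) is genuinely different from the paper's, which argues by contradiction: if $X_{C,d}$ were empty, the stratum $X_{C,L}$ with fixed destabilizing bundle $L$ of degree $d-1$ would have dimension $\ge 4c_2-59$, yet the restricted family would be pulled back (via the simplicity statement of Lemma \ref{LA}) from the Poincar\'e extension over $\mathbb{P}(H^1(C,L\otimes Q^*))$ of dimension $14-2d$, so the ample determinant line bundle $\rho^*\Theta_k$ would have vanishing top self-intersection --- a contradiction. Unfortunately, your construction has a fatal defect before the difficulty you flag in step (iii). The lower modification $\tilde E=\ker(E\to\mathbb{C}_p)$ at a point is never locally free at $p$: locally $E\cong\mathcal{O}^2$ and $\tilde E\cong\mathfrak{m}_p\oplus\mathcal{O}$. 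The upper modification at $q\ne p$ does not repair this, so $E''$ fails to be locally free at $p$ and therefore cannot be a point of $X_{C,d}\subset X\subset\mathcal{M}(H,c_2)$, which parametrizes locally free sheaves. Worse, the defect is unavoidable for this approach: if $E''\supset\tilde E$ has $E''/\tilde E$ of length one and $E''$ is locally free, then $E''=E''^{**}\supseteq\tilde E^{**}=E$ with $c_2(E'')=c_2(E)$, forcing $E''=E$. So every length-one down--up modification either returns $E$ or leaves the locally free locus.

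The secondary claim is also unsubstantiated and, as far as I can see, false. From $0\to\tilde E\to E\to\mathbb{C}_p\to 0$ one gets $0\to\mathbb{C}_p\to\tilde E_{\mid C}\to G\to 0$ with $G=\ker(E_{\mid C}\to Q_p)$; here $L\subset G$ remains saturated (since $G/L\cong Q(-p)$ is torsion-free) and still has degree $d-1$, while the torsion-free quotient $G$ of $\tilde E_{\mid C}$ now has degree $5$ rather than $6$. So the restriction does not acquire a sub-line-bundle of degree $d$ inside a determinant-$H_{\mid C}$ bundle; the degree of the determinant has dropped instead. To genuinely raise the degree of the destabilizing subbundle while fixing $\det=H_{\mid C}$ one would need an elementary modification along the divisor $C$ itself (which changes $c_1$ and leads out of the locally free locus, as in the boundary analysis of Theorem \ref{t1}), and I do not see how to salvage the construction. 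The paper's non-constructive dimension-count argument avoids all of this.
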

\begin{remark}
Note that if $L$ is  a line bundle of degree $d$   with $H^0(C,L \otimes {(E/L)}^*)=0$, then by Riemann-Roch,  we have $d \le 7$.
\end{remark}
\begin{proof}
 Since $L$ destabilizes $E$, $\text{deg}(L) = d \ge 4$.

First, we prove the Proposition for $d =5$. Let $E$ be a such bundle and $L$ be a line subbundle of degree $4$ satisfying the hypothesis of the Proposition. 
Then we have the following exact sequence
\[
0 \to L \to E_{\mid_C} \to Q \to 0.
\]
 On the contrary, let we suppose that $X_{C, 5}$ is empty. Then we have a map $X_C \to J^4(C)$ which takes a torsion free sheaf $E$ to the destabilzing subbundle of $E \vert_C$, where $J^4(C)$ denotes the Jacobian of degree $4$ line bundles on $C$. Thus for a general $L \in J^4(C)$, the dimension of the subvariety $X_{C, L}$ of bundles $E \in X_C$ with $L$ being the maximal destabilizing subbundle is $ \ge 4c_2-49 -10 = 4c_2-59$.  Let $\mathbb{P}(H^1(C, L \otimes Q^*))$ be the  space of
 non-trivial extension classes. Since $H^0(C, L \otimes {(E/L)}^*) = 0$, the dimension of $\mathbb{P}(H^1(C, L \otimes Q^*) )= 6$.
 Then we have a map 
 \[
 \alpha: X_{C, L} \to  \mathbb{P}(H^1(C, L \otimes Q^*))
 \]
 which takes $E$ to $E_{\mid_C}$. 
 Assume that $\mathcal{M}(2, H)$ is fine moduli space. Let $\mathcal{E}$ be a tautological sheaf on $S \times X_{C,L}$ parametrized by $X_{C,L}$.  Let $\tilde{\mathcal{E}}$ be the restriction of $\mathcal{E}$ to $C \times X_{C, L}$. Clearly, $\tilde{\mathcal{E}}$ is a vector bundle. 
 Let $\text{Det}(\tilde{\mathcal{E}})$ denote the determinant line bundle  of the family
  $\tilde{\mathcal{E}}$ . Let $M$ be a line bundle on $C$ such that $\chi(\tilde{\mathcal{E}}_{\mid_{C \times \{x\}}} \otimes P_C^*M) = 0$ for all $x \in X_{C, L}$, where $P_C$ denote the projection onto $C$. Then  Grothendieck -Riemann-Roch gives
 \[
  \text{ch}((P_{X_{C, L}})_!(\tilde{\mathcal{E}} \otimes P_C^*M)) = (P_{X_{C, L}})_*(\text{ch}(\tilde{\mathcal{E}} \otimes P_C^*M).\text{Td}(C)),
 \]
 where $\text{Td}(C)$ denotes the relative Todd class. Considering the degree one component we have,
 
 \begin{equation}\label{A1}
 \text{Det}(\tilde{\mathcal{E}})=-c_1(\text{det}((P_{X_{C, L}})_!(\tilde{\mathcal{E}} \otimes P_C^*M)) )= (P_{X_{C, L}})_*(c_2(\tilde{\mathcal{E}}) - \frac{1}{4}c_1^2(\tilde{\mathcal{E}})).
 \end{equation}
 On the other hand, there is a natural Poincare extension on 
 $C \times \mathbb{P}(H^1(C, L \otimes Q^*))$:
 \[
 0 \to P_C^*L \otimes P_{\mathbb{P}(H^1(C, L \otimes Q^*))}^* \mathcal{O}(1) \to \mathcal{F} \to P_C^*Q \to 0.
 \]
Now consider the  morphism
 $\Phi= \text{Id} \times \alpha: C \times X_{C, L} \to C \times  \mathbb{P}(H^1(C, L \otimes Q^*))$ . Since by Lemma \ref{LA},  $\mathcal{F}_{\mid_{C \times \{y\}}}$ is simple for all $y \in \mathbb{P}(H^1(C, L \otimes Q^*))$, 
 \[
 0 \subset P_C^*L \otimes P_{\mathbb{P}(H^1(C, L \otimes Q^*))}^* \mathcal{O}(1) \subset \mathcal{F}
 \]
 is the HN-filtration of $\mathcal{F}$. Hence   we have $\Phi^*\mathcal{F} \cong \tilde{\mathcal{E}}$.
 Therefore, the determinant bundle $\text{Det}(\tilde{\mathcal{E}})$ of the family $\tilde{\mathcal{E}}$ of rank-two sheaves on $C$, is the 
 pullback of the determinant bundle $\text{Det}(\mathcal{F})$ of the family $\mathcal{F}$. In other words,
 we have 
 \begin{equation}\label{A2}
 \text{Det}(\tilde{\mathcal{E}})=-c_1(\text{det}((P_{X_{C, L}})_!(\tilde{\mathcal{E}} \otimes P_C^*M))) \cong \Phi^*(-c_1(\text{det}((P_{\mathbb{P}(H^1(C, L \otimes Q^*))})_!(\mathcal{F} \otimes P_C^*M))))
 =\text{Det}(\mathcal{F})
  \end{equation}
 Choose an integer $k$ to be sufficiently large. It is known that for a smooth curve in $|kH|$ for sufficiently  large $k$, the restriction of a stable bundle remains stable. Also by Serre's vanishing Theorem, if $k >>0$, then for all $E_1, E_2 \in X_{C, L}$, we have 
 \[
 H^1(X, E_1^* \otimes E_2 \mathcal{O}(-kH)) = 0.
 \]
 Therefore the restriction map 
 \begin{equation}\label{A3}
 \rho: X_{C, L} \to \mathcal{M}(C_k, H_{\mid_{C_k}})
 \end{equation}
 is injective. 
   Let $\tilde{\mathcal{E}}_k$ be the restriction of the family $\mathcal{E}$ to $C_k \times X_{C, L}$.
 Let $M_k$ be a line bundle on $C_k$ such that $\chi((\tilde{\mathcal{E}}_k \otimes P_{C_k}^*M_k)_{\mid_{C_k \times \{x\}}}) = 0$ for all $x \in X_{C, L}$.  Then as in \ref{A1}, 
 we have,
 \begin{equation}\label{A4}
 -c_1(\text{det}((P_{X_{C, L}})_!({\tilde{\mathcal{E}}}_k \otimes P_C^*M_k))) = (P_{X_{C, L}})_*(c_2({\tilde{\mathcal{E}}}_k) - \frac{1}{4}c_1^2({\tilde{\mathcal{E}}}_k)).
 \end{equation}
 Then the left-hand side of the equality in \ref{A4} is
identified with the first Chern class of $\rho^* \Theta_k$ (see \cite{DN}), where $\Theta_k$ is the theta divisor on $\mathcal{M}(C_k, H_{\mid_{C_k}})$, while the right-hand side equals the slant
product 
 \[
 (c_2(\mathcal{E} )-\frac{1}{4}c_1^2(\mathcal{E}))/[kH].
 \]
 Therefore we have, 
 \begin{align*}
 \begin{split}
 c_1(\rho^*(\Theta_k)) = &k \text{Det}(\tilde{\mathcal{E}})\\
 =  & k\Phi^*\text{Det}(\mathcal{F}).
 \end{split}
 \end{align*}
 Since $\rho$ is injective and $\Theta_k$ is ample, $\rho^*(\Theta_k)$ is also ample.  Therefore, we have $\Phi^*\text{Det}(\mathcal{F})$ is ample and hence $(\Phi^*\text{Det}(\mathcal{F}))^{\text{dim}(X_{C, L})} >0$. But if the dimension of $X_{C, L} > 6$, then $(\Phi^*\text{Det}(\mathcal{F}))^{\text{dim}(X_{C, L})}= 0$, a contradiction.
 This proves the proposition for $d=5$, under the assumption that there is a tautological sheaf $\mathcal{E}$ on $S \times X$.
 In general, by Theorem (A.5) in \cite{Mukai} there exists a quasi-tautoloqical
sheaf, $\mathcal{G}$ on $S \times X$, i.e. such that for $[E] \in X$, the restriction $\mathcal{G} \vert{S \times \{[f]\}}$
is isomorphic to $F^{\oplus k}$ for some positive integer $k$. One can repeat the proof
above with $\mathcal{E}$ replaced by $\mathcal{G}$.
 
Inductively, let $X_{C, d}$ be non-empty for $4 \le d < 7$.  Then by \cite[P. 64, 6.37]{OG} the codimension of $X_{C, d}$ in $X$ is $2d+g -7$. Thus, the dimension of $X_{C, d} = 4c_2 -39 -2d -10 +7 = 4c_2-42-2d$. Again, let $E$ be a point in $X_{C, d}$ which sits in the exact sequence,
\[
0 \to L \to E_{\mid_C} \to Q \to 0
\]
where $L, Q$ are line bundles on $C$ with $H^0(C, L \otimes Q^*) = 0$. Then we have the dimension of $X_{C, L}$ is $4c_2 -52 -2d$. On the other hand, the dimension of the projective space $\mathbb{P}(H^1(C, L \otimes Q^* ))$ is $14-2d$. Therefore, we can repeat the argument as long as $4c_2 -52 -2d > 14-2d$ and $14-2d \ge 0$. Now $14-2d \ge 0 \Leftrightarrow d \le 7$ and $4c_2 -52 -2d > 14-2d \Leftrightarrow c_2 \ge 17$.   
      \end{proof} 
     Now  we have the following Corollary. 
 \begin{corollary}\label{CL1}
 Suppose  $X_{C, 5}$ is non-empty  and there exist an $E \in X_{C, 5}$ with a destabilizing  line subbundle $L$  satisfying  $H^0(C, L \otimes {(E/L)}^*) = 0$, then $X_{C, 6}$ is non-empty.
 \end{corollary}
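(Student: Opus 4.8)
The plan is to deduce this directly from Proposition \ref{P2}, since the hypotheses of the corollary have been arranged precisely so that it becomes the case $d = 6$ of that proposition. First I would settle the degree bookkeeping. A bundle $E$ lies in $X_{C,5}$ exactly when the line subbundle appearing in its maximal destabilizing sequence has degree $5$; the corollary hands us such an $E$ together with a destabilizing line subbundle $L$ (so $\deg L = 5 > 3 = \mu(E_{\mid_C})$) satisfying $H^0(C, L \otimes (E/L)) = 0$. Setting $d = 6$, this $L$ has degree $d - 1 = 5$, and therefore the pair $(E, L)$ realizes exactly the input hypothesis of Proposition \ref{P2} for this value of $d$: an $E \in X_C$ carrying a degree-$(d-1)$ destabilizing line subbundle $L$ with the required vanishing.

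Having matched the hypotheses, I would simply invoke Proposition \ref{P2} with $d = 6$ to conclude that $X_{C,6}$ is non-empty. No new analytic or deformation-theoretic input is needed beyond what is already supplied: the proof of Proposition \ref{P2} manufactures points of the stratum $X_{C,d}$ by forming non-trivial extensions $0 \to L \to V \to Q \to 0$ on $C$, which are simple by Lemma \ref{LA} because $H^0(C, L \otimes Q^*) = 0$, and then propagating these extensions to honest bundles on $S$ through the tautological family and the determinant/theta-divisor comparison in \eqref{A2}--\eqref{A4}. The non-emptiness is forced by the dimension inequality between $\dim X_{C,L} = 4c_2 - 52 - 2d$ and $\dim \mathbb{P}(H^1(C, L \otimes Q^*)) = 14 - 2d$, which holds in the stated range; I would only verify that $d = 6$ keeps us inside it, i.e. $14 - 2d = 2 \ge 0$ and $4c_2 - 52 - 2d > 14 - 2d$ for $c_2 \ge 17$.

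There is essentially no independent obstacle here, which is why the statement is flagged as obvious: all of the genuine work lives in Proposition \ref{P2}. The one point that requires care is the index shift, namely that a \emph{degree-five} destabilizing subbundle is precisely what feeds the $d = 6$ stratum, the maximal destabilizing degree of the produced bundle being one larger than the degree of the subbundle one starts from. I would double-check that $d = 6$ satisfies the admissibility bound $4 \le d \le 7$ coming from the Riemann--Roch constraint recorded in the remark after Proposition \ref{P2}; since it does, the conclusion $X_{C,6} \ne \emptyset$ follows at once.
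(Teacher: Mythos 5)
Your proof is correct and is exactly the paper's intended argument: the paper states this as ``the obvious Corollary'' of Proposition \ref{P2}, i.e.\ one applies that proposition with $d=6$ to the given $E\in X_{C,5}$ and its degree-$5$ destabilizing subbundle $L$ with $H^0(C,L\otimes(E/L))=0$. Your bookkeeping of the index shift ($\deg L = d-1 = 5$) and the range checks ($4\le 6\le 7$, $c_2\ge 17$) are the only points needing verification, and you handle them correctly.
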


\section{Boundary of the moduli space}\label{S3}

 For a  subset $X$ of the moduli space $\overline{\mathcal{M}}(H, c_2)$, define 
the boundary of $X$ as, 
\[
 \partial X = \overline{X} \setminus \overline{X} \cap \mathcal{M}(H, c_2),
\]
where $\overline{X}$ denotes the closure of $X$ in $\overline{\mathcal{M}}(H, c_2)$.
Note that if $X$ consists only locally free sheaves, then the boundary of $X$ is
\[
 \partial X= \{[F] \in \overline{X}: F \text{ is not locally free} \}.
\]

In this section we shall prove that if $c_2 \ge 34$ then any irreducible good component  $X \subset \overline{\mathcal{M}}(H, c_2)$ 
 has non-empty boundary. 

\begin{theorem}\label{t1}
 Let $X \subset \mathcal{M}(H, c_2)$ be an irreducible good component. If $c_2 \ge 34$ then $\partial X \ne\varnothing$.
 \end{theorem}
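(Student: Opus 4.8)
The plan is to follow O'Grady's deformation to the boundary and exhibit, directly, a non-locally-free semistable sheaf in the closure $\overline{X}$. The starting point is the non-emptiness results of Section~\ref{S2}: since $c_2 \ge 34 \ge 17$, Proposition~\ref{P1} provides a bundle $E \in X$ whose restriction $E_{\mid_C}$ to a smooth curve $C \in |H|$ is unstable, and by the stratification we may take $E \in X_{C,d}$ with a destabilizing line subbundle $L \subset E_{\mid_C}$ of degree $d$, where $4 \le d \le 7$. The idea is to use this destabilizing data to perform a pair of elementary modifications along $C$, and then to degenerate the modification data so that the resulting sheaf acquires a point where it fails to be locally free, while keeping $c_1 = H$ and $c_2$ fixed.

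Concretely, set $Q = E_{\mid_C}/L$ and form the elementary modification $E' := \ker(E \to i_*Q)$, a bundle with $c_1(E') = 0$ and $c_2(E') = c_2 - d$. The inverse modification presents $E$ as
\[
0 \longrightarrow E \longrightarrow E'(C) \longrightarrow i_*(L \otimes N) \longrightarrow 0,
\]
where $N = \mathcal{O}_C(C)$ has degree $C^2 = 6$, so $\deg(L\otimes N) = d+6$. I would then consider the family of kernels $F_{\mathcal{Q}} := \ker(E'(C) \to \mathcal{Q})$ as $\mathcal{Q}$ ranges over an appropriate subscheme of the Quot scheme of quotients of $E'(C)$ of the fixed numerical type $(\mathrm{ch}_0,\mathrm{ch}_1,\mathrm{ch}_2) = (0,[C], d+3)$. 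When $\mathcal{Q} = i_*M$ is the pushforward of a line bundle $M$ of degree $d+6$ on $C$, the kernel $F_{\mathcal{Q}}$ is locally free with $(c_1,c_2) = (H,c_2)$ and is a deformation of $E$, hence lies in $X$. The key point is that this locus of line-bundle quotients is not closed: its closure contains quotients of the form $\mathcal{Q}_0 = i_*M'' \oplus \mathcal{O}_p$ with $\deg M'' = d+5$ and $p \in C$, for which a Chern-class computation again gives $c_1(F_{\mathcal{Q}_0}) = H$ and $c_2(F_{\mathcal{Q}_0}) = c_2$, but now $F_{\mathcal{Q}_0}$ is not locally free at $p$ because of the skyscraper quotient. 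Since the family of kernels is flat and its general member lies in $X$, the sheaf $F_{\mathcal{Q}_0}$ lies in $\overline{X}$, and being non-locally-free it lies in $\partial X$.

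The hypothesis $c_2 \ge 34$ enters through a dimension count guaranteeing that this construction is non-vacuous and stays inside the closure of the given component $X$. Using $\dim X_{C,d} = 4c_2 - 42 - 2d$ from Section~\ref{S2}, together with the dimensions of the parameter spaces of modification data (here $\mathrm{Pic}^{d+6}(C)$ of dimension $g_C = 10$, the choice of surjection, and the point $p \in C$), I would check that the locus of degenerate quotients producing boundary sheaves meets the closure of the locally-free locus in a non-empty set precisely once $c_2$ is at least $34$; this is the exact analogue of the numerical thresholds appearing in Propositions~\ref{P1} and~\ref{P2}. Properness of $\overline{\mathcal{M}}(H,c_2)$ then ensures that the flat limit is a genuine point of the moduli space.

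The main obstacle I anticipate is controlling the (semi)stability of the degenerate sheaf $F_{\mathcal{Q}_0}$: while stability is open and holds for the general locally-free members near $E$, one must verify that the limit remains semistable (so that it defines a point of $\overline{\mathcal{M}}(H,c_2)$ rather than being discarded) and that the flat limit in the Quot-scheme family genuinely becomes non-locally-free rather than staying in the locally-free locus with a different $c_2$. This is exactly the delicate interplay in O'Grady's argument between the modification along $C$, which by itself preserves local freeness, and the extra point-quotient that forces non-reflexivity; ensuring the destabilizing degree $d$ is large enough that such a degeneration exists and yields a semistable limit is the crux, and is where the lower bound on $c_2$, together with the non-emptiness of the high strata $X_{C,d}$, is essential.
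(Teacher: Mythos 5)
Your setup coincides with the paper's: take $E \in X$ with $E_{\mid_C}$ unstable (Proposition \ref{P1}), perform the elementary modification along $C$ using the destabilizing quotient, and then vary the re-modification datum in a Quot scheme to sweep out a family of sheaves with the original invariants $(H, c_2)$. The divergence, and the gap, is in how a non-locally-free member is produced. You assert that the closure of the locus of line-bundle quotients contains a degenerate quotient $i_*M'' \oplus \mathcal{O}_p$, and that therefore $\overline{X}$ contains the corresponding non-locally-free kernel. But the relevant closure is not taken inside the whole Quot scheme; it must be taken inside the closed subset $\varphi^{-1}(\overline{X})$ of quotients whose kernels actually lie in $\overline{X}$, and nothing you say prevents this subset from being, say, a complete one-dimensional family consisting entirely of line-bundle quotients (a pencil of line subbundles of a rank-two bundle on $C$ is a perfectly consistent object). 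Forcing the degeneration is exactly the content of O'Grady's Lemma 2.15 (compare Lemma \ref{LM4} of the paper): one needs $\dim \varphi^{-1}(\overline{X}) \ge 2$, and then the completeness of the Quot scheme together with the incidence argument in $\mathbb{P}(E_{\mid_C})$ over a fixed point $P \in C$ yields a contradiction with local freeness of all members. You never establish this dimension bound, and it is the heart of the paper's proof: it requires computing $\dim Y_F = h^0(Q^*(C)\otimes L) \ge 2d - 9$, handling the cases $d \le 5$ by pushing up to the stratum $X_{C,6}$ via Proposition \ref{P2} and Corollary \ref{CL1} (this is precisely where $c_2 \ge 34$ is used, through $\dim X_{C,6} = 4c_2 - 54$), and invoking Lemma \ref{l1} to transfer the dimension of $Y_F$ to $\varphi^{-1}(\overline{X})$.

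Two further points you flag as ``anticipated obstacles'' are in fact load-bearing and need the paper's preliminary reductions. First, your claim that each locally free kernel ``is a deformation of $E$, hence lies in $X$'' is not automatic: a deformation stays in the moduli space but can move into a different irreducible component through $E$ unless the base point is unobstructed. The paper arranges this by choosing $F$ general in $X_C$ with $h^0(S,F)=0$ and $H^0(S,\mathcal{E}nd^0F(2))=0$ (using the dimension bounds on $V$ and on the type (II) locus from Section \ref{S1}), which is what makes Lemma \ref{l1} work. Second, stability of the modified kernels is proved in the paper (Claim 1) using $h^0(F)=0$ and $\mathrm{Pic}(S)=\mathbb{Z}$; without it the family does not map to $\overline{\mathcal{M}}(H,c_2)$ at all. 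As written, your argument identifies the right geometric mechanism but omits the dimension count and the completeness argument that make the degeneration unavoidable, so the proof is incomplete at its decisive step.
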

 \begin{proof}
Let us assume that $\partial X = \varnothing$. Then $\overline{X}=X$ and hence by remark \ref{IR1} we can apply the results from the section \ref{S2}.\\
Let $C$ and $X_C$ be as in Section \ref{S2}.
 By Proposition \ref{P1}, $X_C  \ne \varnothing$.

Let $E$ be a vector bundle in $X_C$ and $U$ be an irreducible neighbourhood of $E$ in $X$. Let $U^{ns} \subset U$ be the subset of $U$
consisting of vector bundles $E$ which are not semistable when restricted to $C$.
Then $U^{ns}$ has codimension  $g_C$ 
\cite[Proposition 1.13]{OG}. Thus $\text{dim} U^{ns} \ge 4c_2-39-10 $. On the other hand $\text{dim }V \le 3c_2-21$. Thus,  
for $c_2 \ge 29$ a  general element  $F \in X_C$ has  $h^0(S, F) = 0$.   Thus, by Proposition \ref{Prop1}, a general element in $X_C$ is either smooth or it is potentially obstructed of type II. 
But the dimension of the locus of potentially obstructed bundles of type II, is at most 39.   Therefore, a general element in $X_C$ has $H^0(S, \mathcal{E}nd^0F(2)) = 0$.

Let $F$ be a such  vector bundle, i.e.,   $H^0(S, F)= 0 ,  F_{\mid_C}$ is not stable  and 
 $H^0(S, \mathcal{E}nd^0F(2)) = 0$.
Now  we have a destabilizing sequence  
\[
 0 \longrightarrow L \longrightarrow F \vert_C \longrightarrow Q \longrightarrow 0,
\]
where $L, Q$ are line bundles on $C$ .

Consider the elementary transformation, 
\[
 0 \longrightarrow E \longrightarrow F \longrightarrow \iota_*Q \longrightarrow 0,
\]
where $\iota: C \longrightarrow S$ is the natural injection. Note that $c_1(E)= c_1(F) - [C]= 0$. Now, $H^0(S, F)= 0$ and
$\text{Pic}(S) = \mathbb{Z},$
implies that $E$ is stable. Since $L$ occurs in the above destabilizing sequence,  degree of $L \ge 4$ and 
hence degree of $L \otimes Q^* \ge 2$.

Restricting this sequence to $C$ one gets
\[
 0 \longrightarrow Q(-C) \longrightarrow E \vert_C \longrightarrow L \longrightarrow 0
\]
where $Q(-C)\,=\,Q \otimes (\mathcal {O_S(-C)}_{\mid_C})$. 
Let $Y_F:= Quot(E_{\mid_C}, L)$ be the Grothendieck Quot-scheme parametrizing quotients of $E_{\mid_C}$, that have the same Hilbert
polynomial as $L$.   This set parametrizes a family $\{F_y\}_{y \in Y_F}$ of torsion free sheaves with 
$c_1(F_y)= H$ and 
$c_2(F_y)= c_2$ for all $y \in Y_F$, where $F_y = G_y(C)$ for a subsheaf $G_y$ of $E$, which
is defined as the kernel in the exact sequence:
\[
 0 \longrightarrow G_y \longrightarrow E \longrightarrow i_*L_y \longrightarrow 0.
\]
{\bf Claim 1}: $G_y$ is stable for all $y \in Y_F$.\\
{\bf Proof of claim 1}:
Note that $c_1(G_y) = -H$. Therefore $\mu(G_y) = -3$. If $G_y$ is not stable then it has a line subbundle of the form 
$\mathcal{O}_S(k)$, where $k \ge 0$. If $k = 0$, then $G_y$ has a non-zero section, which gives a non-zero section of
$E$ . Therefore, $F$ admits a non-zero section, which
is a contradiction. 

Thus, we obtain a map
\[
 \varphi: Y_F \longrightarrow \bar{\mathcal{M}}(H, c_2)
\]
and therefore a subset $\varphi^{-1}(\overline{X}) \subset Y_F$. Since $F$ is a smooth point, $h^0(\mathcal{E}nd^0F(2))=0$, thus by the following  Lemmas, we have $\text{dim}(\varphi^{-1}(\overline{X})) =  \text{dim}(Y_F)$.
From the definition of Quot scheme we have 
\begin{align*}
\begin{split}
\text{dim}(Y_ F)  & = h^0(Q^{*}(C) \otimes  L) \\
 & \ge \text{deg}( L) -\text{deg}( Q) + \text{deg}(\mathcal O_{C}(C) )- 9.
 \end{split}
 \end{align*}
 Clearly, if  $\text{deg}(L) \ge 6$, then we have $ \text{dim}(\varphi^{-1}(\overline{X}))  > 1$.\\
 If $\text{deg}(L) \le 5$, then consider  two cases:\\
{\bf Case I}: $H^0(C, L \otimes Q^*) \ne 0$.\\
From the exact sequence,
\[
0 \to \mathcal{O}_S \to \mathcal{O}_S(C) \to \mathcal{O}_C(C) \to 0
\]
we have $h^0(C, \mathcal{O}_C(C)) =3$. Since $H^0(C, L \otimes Q^*) \ne 0, h^0(C, L \otimes Q^* \otimes \mathcal{O}_C(C)) \ge 3$.  Hence $\text{dim}(Y_F) \ge 3$.

{\bf Case II}:$H^0(C, L \otimes Q^*) = 0$.

If $\text{deg}(L) = 4$, then by Proposition \ref{P2}, we have $X_{C, 5}$ is non-empty. Let  $E \in X_{C, 5}$. If  the destabilizing line subbundle $L$  of $E$ has $H^0(C, L \otimes Q^*) \ne 0$, then as in case I, we have $\text{dim}(Y_F) \ge 3$. Otherwise, by Corollary \ref{CL1}, $X_{C, 6}$ is non-empty. 
If $\text{deg}(L) = 5$, then also by Corollary \ref{CL1}, $X_{C, 6}$ is non-empty.  But we have  $\text{dim}(X_{C, 6}) = 4c_2- 54$. By Proposition \ref{p2}, the vector bundles with a non-zero section has dimension $3c_2-21$, Thus if $c_2 \ge 34$ , then we can choose a vector bundle $F \in X_{C, 6}$ with
$h^0(S, F) = 0$ and we have  $\text{deg}(L) = 6$. Thus the dimension of $Y_F \ge 3$. 
Therefore, we have,  $\text{dim}(\varphi^{-1}(\overline{X})) >1$
  Hence by \cite[Lemma 2.15]{OG}, we conclude the 
existence of a boundary point in $\partial X$. 

\end{proof}
\begin{lemma}\label{l1}
 $\text{dim} (\varphi^{-1}(\overline{X})) = \text{dim}(Y_F)$, where $\mathcal{E}nd^0F$ denotes the traceless endomorphisms of $F$.
 \end{lemma}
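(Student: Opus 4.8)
The plan is to single out inside $Y_F$ the distinguished point $y_0$ corresponding to the original destabilizing quotient $E_{\mid_C}\twoheadrightarrow L$ (equivalently the subsheaf $Q(-C)\subset E_{\mid_C}$ of exact sequence \ref{A}), and to observe that $\varphi(y_0)=[F]$. The downward and upward elementary transformations are mutually inverse: one has $\ker(E\to\iota_*L)=F(-C)$, so $G_{y_0}=F(-C)$ and hence $F_{y_0}=G_{y_0}(C)=F$. Since $F$ is locally free and stable, $\varphi(y_0)=[F]\in X\subseteq\overline{X}$, so $y_0\in\varphi^{-1}(\overline{X})$, and the content of the lemma is that this preimage is full-dimensional in $Y_F$.

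First I would verify that $[F]$ is a smooth point of $\overline{\mathcal{M}}(H,c_2)$. By hypothesis $H^0(S,\mathcal{E}nd^0F(2))=0$, and since $K_S\cong\mathcal{O}_S(2)$, Serre duality gives
\[
 \text{Obs}(F)=H^2(S,\mathcal{E}nd^0F)\cong H^0(S,\mathcal{E}nd^0F\otimes K_S)^*=H^0(S,\mathcal{E}nd^0F(2))^*=0 .
\]
Thus the obstruction space vanishes and $[F]$ is a smooth point.

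Next I would exploit smoothness to produce a full-dimensional open subset of the preimage. Since $\mathcal{M}(H,c_2)$ is smooth at $[F]$, the point $[F]$ lies on a unique irreducible component of $\mathcal{M}(H,c_2)$, namely $X$; as $\mathcal{M}(H,c_2)$ is open in $\overline{\mathcal{M}}(H,c_2)$, a sufficiently small open neighbourhood $U$ of $[F]$ in $\overline{\mathcal{M}}(H,c_2)$ satisfies $U\subseteq X\subseteq\overline{X}$. Because $\varphi$ is the classifying morphism of the flat family $\{F_y\}_{y\in Y_F}$ of stable sheaves (shown above), $\varphi^{-1}(U)$ is an open neighbourhood of $y_0$ in $Y_F$ contained in $\varphi^{-1}(\overline{X})$. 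Hence $\varphi^{-1}(\overline{X})$ contains an open subset of $Y_F$ through $y_0$, giving $\dim\varphi^{-1}(\overline{X})\ge\dim_{y_0}Y_F$; combined with the trivial inclusion $\varphi^{-1}(\overline{X})\subseteq Y_F$ this yields the asserted equality, once $\dim_{y_0}Y_F=\dim Y_F$.

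The only genuinely technical point is exactly this last identification: one must know that the local dimension of $Y_F$ at $y_0$ equals $h^0(C,Q^*(C)\otimes L)$, which is the value taken for $\dim Y_F$ in the proof of Theorem \ref{t1}. This follows from the deformation theory of the Quot scheme, whose tangent space at $y_0$ is $\text{Hom}_C(Q(-C),L)=H^0(C,Q^*(C)\otimes L)$ with obstructions in $\text{Ext}^1_C(Q(-C),L)=H^1(C,Q^*(C)\otimes L)$, so that $Y_F$ is smooth of the expected dimension at $y_0$ in the range considered. I expect this verification, rather than the neighbourhood argument, to be where care is required.
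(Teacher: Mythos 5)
Your argument is correct and rests on the same essential input as the paper's proof, but it is packaged differently. The paper does not single out the distinguished point $y_0$: it notes that $\varphi$ is an immersion, identifies $\varphi^{-1}(\overline{X})$ with $X\cap Y_F$, and runs O'Grady's affine dimension count in the smooth ambient local model at $[F]$, namely $\dim(X\cap Y_F)\ge \dim X+\dim Y_F-\dim T_{[F]}\mathcal{M}(H,c_2)$, together with the bound $\dim T_{[F]}\mathcal{M}(H,c_2)-\dim X\le h^2(\mathcal{E}nd^0F)=h^0(\mathcal{E}nd^0F(2))=0$. Your version observes that this same vanishing makes $[F]$ a smooth point, so that locally $X$ exhausts the moduli space and the intersection is locally all of $Y_F$; this is precisely the degenerate case of the affine dimension count, made transparent. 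What you supply that the paper leaves implicit is the check that $Y_F$ actually passes through $[F]$, i.e.\ that $G_{y_0}=F(-C)$ and $F_{y_0}=F$ — this is needed in either version for the intersection to be nonempty, and your identification via the mutually inverse elementary transformations is correct. The affine dimension count is the more robust tool (it would still yield a bound if $h^2\ne 0$), but under the stated hypothesis your direct route loses nothing.

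The one loose end you flag — passing from $\dim_{y_0}Y_F$ to $\dim Y_F$ — is not settled by your appeal to smoothness of the Quot scheme at $y_0$: the obstruction space $H^1(C,Q^*(C)\otimes L)$ has no reason to vanish, since $\deg(Q^*(C)\otimes L)=\deg L-\deg Q+6$ can be as small as $8$ while $2g_C-2=18$. Two remarks put this in perspective. First, the paper's proof carries exactly the same local-versus-global ambiguity (the dimension count only sees components of $Y_F$ through $[F]$), and the paper simply takes $\dim Y_F=h^0(Q^*(C)\otimes L)$. Second, for the use made of the lemma in Theorem \ref{t1} only a lower bound is required, and Quot-scheme deformation theory gives, at every point and without any smoothness hypothesis,
\[
\dim_{y_0}Y_F\;\ge\;\dim\text{Hom}(Q(-C),L)-\dim\text{Ext}^1(Q(-C),L)\;=\;\chi(Q^*(C)\otimes L)\;=\;\deg L-\deg Q-3,
\]
which is exactly the estimate invoked afterwards. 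Replacing your smoothness claim by this unconditional inequality closes the gap for every purpose the lemma serves.
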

 \begin{proof}
 The morphism $\varphi$ is an immersion, hence $\varphi^{-1}(\overline{X})\,=\, \overline{X} \cap Y_ F$ and we have $\text{dim} (\varphi^{-1}(\overline{X})) \le \text{dim}(Y_F)$. Hence by affine dimension count   and the fact that 
 $\text{dim}(T_F\mathcal{M}(H, c_2)) \,\ge \, \text{dim} (\mathcal{M}(H, c_2))$ we have
\[
\text{dim}( \varphi^{-1}(\overline{X})) \,=\,\text{dim}(\overline{X} \cap Y_F) \ge \text{dim }\overline{X} + \text{dim }Y_F - \text{dim }T_{[F]}\mathcal{M}(H, c_2).
\]
It is known that $\text{dim }T_{[F]}\mathcal{M}(H, c_2)-\text{dim }X$ is bounded above by 
$h^2(\mathcal{E}nd^0F)= h^0(\mathcal{E}nd^0F(2))=0$, which concludes the Lemma.

  \end{proof}
  
 Now we include a Proposition in this section which we need in the next section.
  \begin{proposition}\label{LMM1}
 Let $S$ be an irrducible smooth projective surface over complex numbers. Let $\mathcal{F}$ be a family of rank $2$ torsion free 
 sheaves on $S$ parametrized by an irreducible scheme $B$. Let $B_0 \subseteq B$ be the subset of points $b$ such that 
 $\mathcal{F}_b$ is not locally free at a zero-dimensional subscheme of length at least $2$. Assume $B_0$ is non-empty. Then
 it has codimension at most $4$in $B$.
\end{proposition}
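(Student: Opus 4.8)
The plan is to reformulate membership in $B_0$ cohomologically, realize $B_0$ as a degeneracy locus of a coherent sheaf on $B$, and then bound its codimension by exhibiting it locally as the zero locus of two functions, after which Krull's principal ideal theorem gives the conclusion.

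First I would make the defining condition relative. For a rank-two torsion-free sheaf $F$ on the smooth surface $S$ there is an exact sequence $0 \to F \to F^{**} \to Q \to 0$ with $F^{**}$ locally free and $Q$ of finite length; applying $\mathcal{H}om(-,\mathcal{O}_S)$ and using $\mathcal{E}xt^{i}(F^{**},\mathcal{O}_S)=0$ for $i>0$ together with $\mathcal{E}xt^{i}(Q,\mathcal{O}_S)=0$ for $i<2$ yields
\[
\mathcal{E}xt^1_{\mathcal{O}_S}(F,\mathcal{O}_S)\cong \mathcal{E}xt^2_{\mathcal{O}_S}(Q,\mathcal{O}_S),
\]
a finite-length sheaf of length $\text{length}(Q)$ supported exactly on the non-locally-free locus of $F$. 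Thus $b\in B_0$ if and only if $\text{length}\,\mathcal{E}xt^1_{\mathcal{O}_S}(\mathcal{F}_b,\mathcal{O}_S)\ge 2$. Working over $S\times B$, I would form $\mathcal{G}:=\mathcal{E}xt^1_{\mathcal{O}_{S\times B}}(\mathcal{F},\mathcal{O}_{S\times B})$. It is supported on the relative non-locally-free locus $Z\subset S\times B$, which is proper and quasi-finite, hence finite, over $B$; pushing forward, $\mathcal{H}:=(p_B)_*\mathcal{G}$ is a coherent sheaf on $B$ whose fibre dimension at $b$ computes, up to the standard semicontinuous correction from base change, the length above. Consequently $B_0$ is the closed locus where $\mathcal{H}$ cannot be generated by a single element.

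Next I would localize and reduce to Krull. Since $B$ is irreducible, it suffices to prove that every irreducible component of $B_0$ has codimension $\le 2$, and for this it is enough to show that in an \'etale or analytic neighbourhood of an arbitrary point $b_0\in B_0$ the subscheme $B_0$ is the common zero locus of at most two regular functions; the generalized principal ideal theorem then bounds the codimension of each component through $b_0$ by two, and letting $b_0$ range over $B_0$ gives $\text{codim}_B B_0\le 2$. Moreover, near $b_0$ the family $\mathcal{F}$ is the pullback of the miniversal family under the classifying map $B\to \text{Def}(\mathcal{F}_{b_0})$, and defining equations pull back to defining equations; hence it suffices to produce two equations cutting out the locus $\{\text{length}\,\mathcal{E}xt^1\ge 2\}$ inside $\text{Def}(\mathcal{F}_{b_0})$.

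The crux, and the step I expect to be hardest, is precisely this local two-equation description: raising the length of the singularity from $1$ to at least $2$ should impose exactly two independent conditions. The geometric mechanism is the elementary-modification count. Fixing the locally free double dual $V=\mathcal{F}_{b_0}^{**}$, the torsion-free modifications of $V$ of colength $\ell$ are parametrized by $\text{Quot}^{\ell}(V)$, which on a surface has dimension $3\ell$, while the space of such bundles $V$ grows by $4$ as $c_2(V)$ drops by $1$; balancing these shows that the stratum $\{\text{length}=\ell\}$ has codimension $\ell$, so $\{\text{length}\ge 2\}$ has expected codimension $2$. Turning this expected-codimension statement into an actual two-equation (local complete intersection) description is the delicate point; I would approach it by choosing, via Nakayama, a minimal local presentation of $\mathcal{H}$ at $b_0$ and analysing the associated determinantal locus where the presentation drops rank by two, using the Quot-scheme dimension count to guarantee that the singularity imposes no excess conditions and that the minimal presentation has the length forcing the locus to be defined by two equations. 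Granting the two equations, Krull's principal ideal theorem bounds the codimension of every component of $B_0$ by two, completing the proof.
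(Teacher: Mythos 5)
Your reduction of the statement to ``$B_0$ is locally the zero locus of two functions, then apply Krull'' is the right shape of argument, but the step you yourself flag as the crux is exactly where the proposal breaks down, and the mechanism you propose for it does not work. If you take a minimal presentation $\mathcal{O}_B^a \xrightarrow{\phi} \mathcal{O}_B^b \to \mathcal{H} \to 0$ of the pushed-forward sheaf $\mathcal{H}=(p_B)_*\mathcal{E}xt^1(\mathcal{F},\mathcal{O})$ near $b_0$, the locus $\{\dim_{k(b)}\mathcal{H}\otimes k(b)\ge 2\}$ is the corank-$2$ degeneracy locus $\{\operatorname{rank}\phi(b)\le b-2\}$, which is cut out by the $(b-1)\times(b-1)$ minors of $\phi$ and whose Eagon--Northcott/Macaulay codimension bound is $(a-b+2)\cdot 2$. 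Since $\mathcal{H}$ is a torsion sheaf on $B$ (generically zero, or at least of generic rank $0$ when the general member of the family is locally free), one has $a\ge b$ and the bound is $\le 4$, not $\le 2$; there is no visible way to extract two defining equations from this presentation. The Quot-scheme count ($\operatorname{codim}=\ell$ for length $\ell$) correctly predicts the answer, but it is a statement about the universal situation and does not by itself produce a local complete intersection structure on $B_0$ inside an arbitrary base $B$. There are also smaller unaddressed issues (base change for the relative $\mathcal{E}xt^1$ and for $(p_B)_*$, and the fact that ``length $\ge 2$'' includes a length-$2$ singularity concentrated at a single point, which your two-separate-conditions heuristic does not obviously see).

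The paper's proof gets the correct number by working upstairs rather than on $B$: locally on $S\times B$ one resolves $\mathcal{F}$ by $0\to \mathcal{O}^n \xrightarrow{f} \mathcal{O}^{n+2}\to \mathcal{F}\to 0$, so the non-locally-free locus $\operatorname{Sing}(\mathcal{F})\subset S\times B$ is the pullback of the corank-one stratum $\operatorname{Hom}_1\subset \operatorname{Hom}(k^n,k^{n+2})$, which has codimension $(1)(3)=3$; imposing a singular point over each of two distinct points $x_1\ne x_2$ of the fibre costs at most $3+3=6$ in (effectively) $S\times S\times B$, and subtracting the fibre dimension $4$ of $S\times S\times B\to B$ gives $\operatorname{codim}_B B_0\le 2$. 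The essential ingredient your argument is missing is this extra factor of $S$ (resp.\ $S\times S$): the two conditions are ``corank $\ge 1$ at each of two points that are allowed to move in the surface,'' and the four parameters of the moving points are what convert a pair of codimension-$3$ determinantal conditions into a codimension-$2$ condition on $B$. If you want to salvage your write-up, replace the analysis of $\mathcal{H}$ on $B$ by the degeneracy-locus computation on $S\times B$ and the incidence variety over $S\times S$.
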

 
 \begin{proof}
 The proof goes in the same line argument as in \cite{OG2}.
  Let $\text{Sing}(\mathcal{F})$ be the set of points $x \in S \times B$ such that the stalk of $\mathcal{F}$ at $x$ is not
   free.
  Let $b_0 \in B_0$ be such that $\mathcal{F}$ is not free at $p_1= (x_1, b_0)$ and $p_2=(x_2, b_0)$. 
  Since $S$ is projective, there exists an affine neighborhood containing both $p_1$ and $p_2$. Let $\text{Spec}(A)$ be such an affine
  neighborhood, then $\mathcal{F}(\text{Spec}(A)) =M$, where $M$ is 
  a torsion free $A-$ module of rank 2. Consider a short free resolution
  \[
   0 \longrightarrow A^n \longrightarrow^f A^{n+2} \longrightarrow M \longrightarrow 0.
  \]
Evaluating $f$ at $p_1$ and $p_2$ we get two morphism 
$\alpha_i: \text{Spec}(A_{p_i}) \longrightarrow \text{Hom}(k^n, k^{n+2}), i= 1, 2$.

Let $\text{Hom}_1 \subset \text{Hom}(k^n, k^{n+2})$ be the subset of maps with positive dimensional kernel.
Then Clearly 
\[
 \text{Sing}(\mathcal{F}(\text{Spec}(A_{p_i}))) = \alpha_i^{-1}(\text{Hom})_1, i= 1, 2.
\]
But 
\[
 \text{codim}(\text{Hom}_1, \text{Hom}(k^n, k^{n+2})) = 3.     
 \]
On the other hand,  codimension of 
$S \times B_0 \le \text{codim } \text{Sing}(\mathcal{F}(\text{Spec}(A_{p_2})))\cap \text{Sing}(\mathcal{F}(\text{Spec}(A_{p_2})))\le 3+3=6.$
There for the codimension of $B_0$ in $B$ is at most $4$.
\end{proof}
\begin{remark}\label{RZZ}
In fact, repeating the same argument one can inductively prove that if $B_0$ consists of points which are not locally free at $m$ points and $B_0$ is non-empty, then co-dimension 
of $B_0$ in $B$ is at most $3m-2$.
\end{remark} 



\section{connectedness of the moduli space}\label{S4}
In this section, we will prove that the moduli space $\overline{\mathcal{M}}(H, c_2)$ is connected for $c_2 \ge 27$. First we prove some
Lemmas which we need to prove the connectedness theorem.

\begin{lemma}\label{LM2}
 $\overline{V}= \overline{V}(c_2):= \{[F] \in \overline{\mathcal{M}}(H, c_2) | h^0(F) \ne 0\}$ is connected for $c_2 \ge 27$.
\end{lemma}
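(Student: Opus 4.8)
The plan is to show that every point of $\overline{V}$ can be connected to a fixed reference locus inside $\overline{V}$, by analyzing the extension structure of sheaves $[F]$ with $h^0(F) \neq 0$. Recall from the proof of Proposition \ref{p1} that any such locally free $F$ fits into an exact sequence
\[
 0 \longrightarrow \mathcal{O}_S \longrightarrow F \longrightarrow \mathcal{J}_Z(1) \longrightarrow 0,
\]
with $Z \in \mathrm{Hilb}^{c_2}(S)$, and more generally a torsion free $[F] \in \overline{V}$ arises from such an extension where $Z$ is allowed to be a subscheme whose nonlocally-free locus produces the boundary points. The first step is to set up the incidence variety $\mathcal{N}(\overline{V})$ of pairs $(F, s)$ with $s \in \mathbb{P}(H^0(F))$, exactly as in Proposition \ref{p1}, together with its two projections $p_1 \colon \mathcal{N}(\overline{V}) \to \overline{V}$ and $p_2 \colon \mathcal{N}(\overline{V}) \to \mathrm{Hilb}^{c_2}(S)$. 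Since $p_1$ is surjective, connectedness of $\overline{V}$ will follow from connectedness of $\mathcal{N}(\overline{V})$, which I would deduce from the connectedness of the base $\mathrm{Hilb}^{c_2}(S)$ together with connectedness of the fibers of $p_2$.

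The second step is to control the fibers of $p_2$. Over a point $Z \in \mathrm{Hilb}^{c_2}(S)$, the fiber $p_2^{-1}(Z)$ is the projective space $\mathbb{P}(\mathrm{Ext}^1(\mathcal{J}_Z(1), \mathcal{O}_S))$ of extension classes (restricted to those classes giving rise to a semistable sheaf lying in $\overline{\mathcal{M}}(H,c_2)$). The key computation, already carried out in Proposition \ref{p1}, is that for $c_2 \ge 20$ one has $h^1(\mathcal{J}_Z(3)) = c_2 - 20 > 0$ for general $Z$, so these fibers are nonempty projective spaces, hence connected. I would then argue that the locus of $Z$ where the fiber dimension jumps (the strata $\triangle_i$ from Proposition \ref{p1}) is of high codimension, so it does not disconnect the total space; the generic projective-space fiber over the connected, and in fact irreducible, base $\mathrm{Hilb}^{c_2}(S)$ forces $\mathcal{N}(\overline{V})$ to be connected. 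Since $\mathrm{Hilb}^{c_2}(S)$ is irreducible (Fogarty) for the smooth surface $S$, this is the decisive structural input.

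The third step is to make sure the \emph{boundary} points of $\overline{V}$, i.e. the non-locally-free $[F]$, are genuinely in the closure of the locally free part and do not form extra connected components. Here I would use that a torsion-free extension of $\mathcal{J}_Z(1)$ by $\mathcal{O}_S$ degenerates continuously as $Z$ varies within $\mathrm{Hilb}^{c_2}(S)$ (which includes nonreduced and nonlocally-complete-intersection subschemes), so the boundary is swept out by limits of the locally free family and attaches to the same connected total space $\mathcal{N}(\overline{V})$. In other words, the single incidence variety built over all of $\mathrm{Hilb}^{c_2}(S)$ already accounts for both the interior and boundary points of $\overline{V}$ simultaneously, and its image under $p_1$ is all of $\overline{V}$.

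\textbf{The main obstacle} I anticipate is the fiber-jumping over the special loci $\triangle_i$: one must rule out that the extension groups degenerate so badly that the total space $\mathcal{N}(\overline{V})$ breaks into several pieces, and one must verify that the semistability constraint cutting out the admissible extension classes does not excise a subset that disconnects the generic projective-space fibers. The dimension estimates on $\triangle_i$ from Proposition \ref{p1} (codimension $\ge i$ in $\mathrm{Hilb}^{c_2}(S)$) are precisely what I would invoke to show these bad loci are too small to separate components, so that connectedness propagates from the generic fibers across all of $\mathrm{Hilb}^{c_2}(S)$.
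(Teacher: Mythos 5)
Your proposal is correct and follows essentially the same route as the paper: the incidence variety $\mathcal{N}$ of pairs $(F,s)$, surjectivity of $p_1$, connectedness of the image of $p_2$ in the irreducible Hilbert scheme (via non-vanishing of $H^1(\mathcal{J}_Z(3))$ for $c_2\ge 21$), and connectedness of the projective-space fibers $\mathbb{P}(\mathrm{Ext}^1(\mathcal{J}_Z(1),\mathcal{O}))$. Your extra care about the jumping loci $\triangle_i$ and about the semistability constraint is reasonable but not needed in substance (every nontrivial such extension is automatically $\mu$-stable since $\mathrm{Hom}(\mathcal{O}(1),\mathcal{J}_Z(1))=0$), and the paper handles the boundary points exactly as you do, by noting that non-locally-free $F\in\overline{V}$ still sit in extensions of $\mathcal{J}_Z(1)$ by $\mathcal{O}$.
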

 \begin{proof}
   Note that,  by remark \ref{RZ}, $\overline{\mathcal{M}}(H, c_2)$ is good for $c_2 \ge 27$. Hence $\mathcal{M}(H, c_2)$ is dense in $\overline{\mathcal{M}}(H, c_2)$. Therefore, 
   $ \overline{V}$ is the closure of $V(c_2)$, where $V(c_2)$ consists only locally free sheaves with non-zero sections.\\
    Let $F \in \overline{V}$. Since the maximal slope of subsheaves is zero, we have a short exact sequence 
 \[
 0 \rightarrow \mathcal O \rightarrow F \rightarrow \mathcal{J}_{Z}(1) \rightarrow 0.
 \]
 
 Let $\mathcal{N}$ be the space of pairs 
\[
 \mathcal{N}= \{(E, s): E  \in \overline{V},  s \in \mathbb{P}(H^0(S, E))\},
\]
as in Proposition \ref{p1}.
Consider the following diagram
 \[
\xymatrix{
& \mathcal{N} \ar[r]^{p_2} \ar[d]^{p_1} & \text{Hilb}^{c_2}(S)\\
& \overline{V}\\
}  
\]
Clearly $p_1$ is surjective.  
Since $c_2 \ge 20$ and $H^0(S, \mathcal{O}_S(3)) = 20$, for a general point in $\text{Hilb}^{c_2}(S)$, we have  $h^0(\mathcal{J}_{Z}(3)) = 0$.  Thus, if $c_ \ge 21$, a general point in $\text{Hilb}^{c_2}(S)$ satisfies Caley-Bacharach property for cubics. Therefore, for a general point $Z \in \text{Hilb}^{c_2}(S)$, the extension group $H^1(S, \mathcal{J}_Z(3))$ is nontrivial.  Hence, $p_2(\mathcal{N})$ is a dense subset of $\text{Hilb}^{c_2}(S)$ and hence connected.\\ 
  Since $p_2(\mathcal{N})$ is connected and the fibers over the projection map $p_2$    are the projective spaces $\mathbb{P}(\text{Ext}^1(\mathcal{J}_Z(1),\mathcal O)), \mathcal{N}$ is connected.  Therefore, $\overline{V} $ is also connected. 

 \end{proof}

Let $c_2 \ge 27$ and $X$ be an irreducible component  of $\overline{\mathcal{M}}(H, c_2)$ which is contained in a connected component  $Z$ not intersecting the connected component containing  $\overline{V}(c_2)$. Further, assume that the 
boundary $\partial{X}$ is non-empty. Note that for  $c_2 \ge 27, \overline{\mathcal{M}}(H, c_2)$ is good. Thus, there is no component of $\overline{\mathcal{M}}(H, c_2)$ consisting only boundary elements. Thus, by  \cite[Prop. 4.3]{OG}, $\partial{X}$ is of codimension 1.
Let $\mathcal{M}(c_2, c_2^\prime):= \{[F] \in \overline{\mathcal{M}}(H, c_2)| F \text{ is not locally free with } c_2(F^{**})=c_2^\prime \}$.
\begin{remark}\label{R2}
  Note that a general element in $\mathcal{M}(c_2, c_2^\prime)$ is a smooth point of $\overline{\mathcal{M}}(H, c_2)$ for $c_2^\prime \ge 20$. To see this, 
  let $F$ be a general torsion-free sheaf.
Then it is the kernel of a general surjection $E \longrightarrow S$ from a stable bundle $E$ general in $\mathcal{M}(H, c_2^\prime)$
 to a sheaf $S$ of length $c_2 - c_2^\prime$. 
 
 If $F$ were a singular point, then there would exist a nontrivial co-obstruction $\phi: F \longrightarrow F(2)$. This would
 have to come from a nontrivial co-obstruction $E \longrightarrow E(2)$ for $E$, but that can not exist as a general point $E$ 
 is smooth in  $\mathcal{M}(H, c_2^\prime)$ for $c_2^\prime \ge 20$.
 
 On the other hand, if $c_2^\prime \le 18$ then a general element  $F \in \mathcal{M}(H, c_2)$ we have $h^0(F) \ne 0$,  \cite[Theorem 4.1]{DS}. 
 For $c_2 = 19$, if all the cohomologies  of $F$ vanishes, then a general such point is smooth. If one of the cohomologies do not vanish then, as in \cite[Theorem 4.1]{DS},  one can easily see that a general element $F$, $h^0(F) \ne 0$. 
 \end{remark}

\begin{proposition}\label{LM3}
 Suppose $Z$ contains a sheaf which is not locally free along a subscheme of length at least $m-1$, where $m$ is an integer satisfying $4c_2 -39 -(3m-2)-(2m-2)> 27$.  Then $Z$ contains a sheaf which is not locally free along a subscheme of length  at least $m$. 
  \end{proposition}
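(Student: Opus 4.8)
The plan is to argue by contradiction, combining the codimension bound of Remark \ref{RZZ} with the restriction-to-a-curve technique of Proposition \ref{P1} and the deformation-to-the-boundary construction of Theorem \ref{t1}. Suppose $F_0 \in Z$ is not locally free at $\geq m-1$ points and let $X'$ be an irreducible component of $Z$ containing $F_0$. Since $\overline{\mathcal{M}}(H,c_2)$ is good for $c_2 \ge 27$ (Remark \ref{RZ}) and hence locally a complete intersection of the expected dimension, $\dim X' = 4c_2 - 39$. Applying Remark \ref{RZZ} (the iterated form of Proposition \ref{LMM1}) to a family parametrized by $X'$, the locus $X'_{m-1} \subset X'$ of sheaves not locally free at $\geq m-1$ points is non-empty, contains $F_0$, and has codimension at most $m-1$; therefore
\[
\dim X'_{m-1} \ \geq\ 4c_2 - 39 - (m-1) \ >\ 27 .
\]
Assume for contradiction that no sheaf of $Z$ is non-locally free at $\geq m$ points, so that the generic member of $X'_{m-1}$ is singular at exactly $m-1$ points.

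The key observation is that $27 = 3g_C - 3 = \dim \mathcal{M}(C, H_{\mid_C})$, the dimension of the moduli of rank-two semistable bundles on a smooth $C \in |H|$ (of genus $g_C = 10$). I would fix a general such curve $C$, which can be chosen to avoid the finitely many singular points of a general $F \in X'_{m-1}$, so that $F_{\mid_C}$ is a well-defined rank-two bundle. If $F_{\mid_C}$ were semistable for every $F$ in the stratum, the restriction map $X'_{m-1} \to \mathcal{M}(C, H_{\mid_C})$ together with the theta-divisor argument of \cite[Proposition 2.18]{OG} would force $(\rho^*\Theta)^{\dim X'_{m-1}} = 0$, contradicting ampleness of $\Theta$ since $\dim X'_{m-1} > 27$. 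Hence there exists $F \in X'_{m-1}$, singular at exactly $m-1$ points and with $F_{\mid_C}$ not semistable, exactly as in the proof of Proposition \ref{P1}.

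I would then run the deformation-to-the-boundary construction of Theorem \ref{t1} on this $F$: from the destabilizing sequence of $F_{\mid_C}$ I form the elementary transformation along $\iota\colon C \hookrightarrow S$, pass to the Quot scheme $Y_F$ of quotients of $E_{\mid_C}$, and use the injectivity and dimension estimates of that proof to verify $\dim \varphi^{-1}(\overline{X'}) > 1$, so that \cite[Lemma 2.15]{OG} yields a non-locally-free point in $\overline{X'}$. Because $C$ was chosen disjoint from the $m-1$ existing singular points, the elementary transformation does not disturb them, and the boundary limit produced by Lemma 2.15 is singular at those $m-1$ points together with one further point arising on $C$; thus it is not locally free at $\geq m$ points. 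Since $Y_F$ is irreducible and maps into $\overline{\mathcal{M}}(H,c_2)$ with $F \in X' \subset Z$ in its image, the whole image lies in the connected component $Z$, so this new sheaf belongs to $Z$, contradicting our assumption and proving the Proposition.

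The main obstacle will be the last point of the third paragraph: guaranteeing that the boundary limit is singular at a genuinely \emph{new} point rather than merely redistributing the length of the existing $m-1$ singularities. This requires choosing $C$ generic relative to the singular support, and re-deriving the estimate $\dim\varphi^{-1}(\overline{X'}) = \dim Y_F > 1$ of Theorem \ref{t1} and Lemma \ref{l1} in the presence of the pre-existing singularities (controlling $h^0(Q^*(C)\otimes L)$ and $h^0(S,\mathcal{E}nd^0F(2))$ for the chosen $F$); the codimension control of Remark \ref{RZZ} is what keeps the stratum $X'_{m-1}$ of dimension exceeding $27$ throughout, which is precisely the hypothesis $4c_2 - 39 - (m-1) > 27$.
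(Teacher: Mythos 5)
Your proposal tracks the paper's argument closely through the first two thirds: contradiction hypothesis, the codimension bound from Remark \ref{RZZ} giving a stratum of dimension $> 27 = \dim \mathcal{M}(C,H_{\mid C})$, the theta-divisor argument forcing some $F$ in that stratum to restrict unstably to a curve $C$ chosen away from its $m-1$ singular points, and then the elementary transformation and the Quot scheme $Y_F$. The gap is exactly where you flag it, and it is not a technicality you can defer: the appeal to O'Grady's Lemma 2.15 does not do the job here. That lemma (as used in Theorem \ref{t1}) produces a \emph{boundary} point in the closure of the family, i.e.\ a non-locally-free sheaf; but in the present situation $F$ is already non-locally-free at $m-1$ points, so every member $F_v$ of the family is already in the boundary and the conclusion of Lemma 2.15 is vacuous. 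What you must actually show is that some member of the family acquires an \emph{additional} non-locally-free point on $C$, equivalently that some quotient $L_v$ in $0 \to G_v \to E \to i_*L_v \to 0$ fails to be locally free on $C$; nothing in your sketch rules out the family consisting entirely of locally free quotients $L_v$, in which case every $F_v$ is singular at exactly the same $m-1$ points and no contradiction arises.

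The paper closes this with a separate argument (Lemma \ref{LM4}), which is the real content of the proposition: if the $\geq 2$-dimensional subfamily $\Sigma \subset Y_F$ consisted only of quotients locally free along $C$, then each $v \in \Sigma$ would give an exact sequence $0 \to Q_v \to E_{\mid C} \to L_v \to 0$ of bundles, hence a section $\bar h_v(\mathbb{P}(Q_v))$ of the ruled surface $\mathbb{P}(E_{\mid C})$, injectively in $v$. Evaluating at a fixed point $P \in C$ gives a map $\Sigma \to \mathbb{P}(E_P)$ with a fiber of dimension $\geq 1$, and the union of the corresponding sections is then a closed subvariety of $\mathbb{P}(E_{\mid C})$ of dimension $\geq 2$ whose intersection with the fiber $\mathbb{P}(E_P)$ is positive-dimensional --- contradicting the fact that all these sections pass through the single point $\mathbb{P}(Q_{v,P}) = [V]$ of that fiber. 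Some such argument (exploiting that $\dim \Sigma \geq 2$ while the quotient at a fixed point of $C$ is a single point) is indispensable; without it your proof does not go through.
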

\begin{proof}
 On the contrary, let assume that   all sheaves in the boundary $\partial Z$  of $Z$ are not locally
free along a subscheme of length  at most $m-1$. 

Let $(\partial Z)_{m-1}(S)$ be the set which  parametrizes torsion-free sheaves  $F \in \partial Z$ which fit in the following exact sequence \[
0 \to F \to F^{**} \to Q_F \to 0,
\]
where $Q_F$ is a coherent sheaf of length $m-1$. By hypothesis. $\partial{Z}_{m-1}(S)$ is non-empty. \\
Let $C \in |H|$ be a smooth curve.\\
{\bf Claim I}:\\
There is a subscheme $P$ of length $m-1$ supported at $m-1$ distinct points not intersecting $C$ and a closed subset of $\partial{Z}$ of dimension $\ge 28$ such that every sheaf in the closed set is not locally free along $P$.\\
{\bf Proof of the Claim I}:\\
 By \cite[Proposition 6.4]{Li2}, a general element in $\partial{Z}_{m-1}(S)$ is not locally free along a reduced subscheme of length ${m-1}$. Thus we have a rational map:\\
$\sigma: \partial{Z}_{m-1}(S) \dashrightarrow  S^{m-1}-\text{ diag}$ (the space of choices of distinct (m-1)-uple of points in
S),\\
which takes $F \to \text{support}(Q_F)$.
Let $P$ be a general element in the image of $\sigma$ which do not intersect $C$. Then $\text{dim}(\sigma^{-1}(P)) \ge \text{dim}(\partial{Z}_{m-1}(S))-(2m-2) $.
Note that, the set of sheaves in $\partial{Z}$, which are not locally free along $P$ is closed in $\partial{Z}$. Thus $\sigma^{-1}(P)$ is a closed subset of $\partial{Z}$.
By remark \ref{RZZ},  the dimension of $(\partial Z)_{m-1}(S) \ge \text{dim}(Z) -(3m-2) \ge 4c_2-39 -(3m-2)$.\\
Thus $\text{dim}(\sigma^{-1}(P)) \ge 4c_2-39-(3m-2)-(2m-2)\ge 28$, by hypothesis.\\

Since $P$ does not intersect $C$, every sheaf in $\sigma^{-1}(P)$ is locally free when restricted to $C$.\\
{\bf Claim II}:\\
There is a sheaf $F \in \sigma^{-1}(P)$ such that $F \vert_C$ is not semistable.\\
{\bf Proof of the Claim II}:\\
On the contrary let us assume that $E \vert_C$ is semistable for all $E \in \sigma^{-1}(P)$.
Then, we have a morphism $\rho: \sigma^{-1}(P) \to \mathcal{M}(C, H \vert_C)$.
    Since the dimension of $\text{dim}(\sigma^{-1}(P)) \ge 28$,   arguing as in  Proposition \ref{P1} in Section \ref{S2}, one can show that 
 there exists a stable  torsion free sheaf  which  is nonstable when restricted to $C$.\\
 Let $F$ be a general such element. Note that if $c_2 \le 18$, then a general element has a non-zero section. We assume $c_2 \ge 19$.  By remark \ref{R2},  we can further assume $F$ to be smooth. \\
 Now we  have a destabilizing sequence of the form,
 \begin{equation}\label{*}
 0 \to L \to F_{\mid_C} \to Q \to 0.
 \end{equation}
 
 
 Consider the elementary transformation 
 \[
 0 \to E \to F \to \iota_*Q \to 0.
 \]
 Restricting this sequence to $C$, one gets 
 \[
 0 \to Q(-C) \to E \vert_C \to L \to 0.
 \]
 Let $Y_F:= \text{Quot}(E \vert_C, L)$ be the Grothendieck Quot-scheme parametrizing quotients of $E \vert_C$, that have the same Hilbert polinomial as $L$.
 $Y_F$ parametrizes 
  a family $\{F_v\}_{v \in Y_F}$ of $\mu$-stable sheaves with 
$c_1(F_v)= H$ and 
$c_2(F_v)= c_2$ for all $v \in Y_F$, where $F_v = G_v(C)$ for a subsheaf $G_v$ of $E$, which
is defined as the kernel in the exact sequence:
\begin{equation}\label{eqz}
 0 \longrightarrow G_v \longrightarrow E \longrightarrow i_*L_v \longrightarrow 0.
\end{equation}
Let $\pi_C^*(E \vert_C) \to \mathcal{L}$ be the tautological quotient sheaf on $C \times Y_F$ and let $\mathcal{G}$ be the sheaf on $S \times Y_F$ fitting in to the exact sequence,
\[
0 \to \mathcal{G} \to \pi_S^*(E \vert_U) \to {(\iota \times \text{Id})}_*\mathcal{L} \to 0.
\]
As in the proof of \cite[Lemma 2.13]{OG}, one can see 
 that this is a flat family over $Y_F$ and $G_v$ is not locally free at $y \in S \setminus P$ if and only if $L_v$ is not
locally free.
Thus, we obtain a map
\[
 \varphi: Y_F \longrightarrow \overline{\mathcal{M}}(H, c_2)
\]
and a subset $\Sigma:= \varphi^{-1}(\sigma^{-1}(P)) \subset Y_F$. As in Theorem \ref{t1}, we have  $\text{dim}(\Sigma) \ge 2$.  Now the proposition follows from the following Lemma.
\end{proof}
\begin{lemma}\label{LM4}
Let  $\Sigma \subset Y_F$ be a closed subvariety. If $\text{dim}(\Sigma) \ge 2$, then $\Sigma$ contains a point which is not locally free at, at least $m$ points.
\end{lemma}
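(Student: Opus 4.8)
The plan is to reduce the statement to producing a single point of $\Sigma$ at which the universal rank-one quotient of $E_{\mid_C}$ acquires torsion, and then to force the existence of such a point by a Chern-class (degeneracy-locus) obstruction, in the spirit of the Grothendieck--Riemann--Roch / determinant-line-bundle computation already carried out in the proof of Proposition \ref{P2}.

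First I would record the reduction. For every $v$ the sheaf $F_v = G_v(C)$ agrees with $F$ in a neighbourhood of the $m-1$ points $x_1,\dots,x_{m-1}\in S\setminus C$, since $G_v$ and $E$ differ only along $C$; hence $F_v$ is not locally free at $x_1,\dots,x_{m-1}$ for all $v$. As noted after \eqref{eqz}, $G_v$ (equivalently $F_v$) fails to be locally free at a point $y\in C$ precisely when the quotient $L_v$ of $E_{\mid_C}$ fails to be locally free at $y$. Thus it suffices to find one $v\in\Sigma$ for which $L_v$ has torsion at some $y\in C$: such a $v$ produces a sheaf that is not locally free at the $m$ points $x_1,\dots,x_{m-1},y$. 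Note also that $\Sigma$ is closed in the projective Quot-scheme $Y_F$, hence projective.

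Next I would set up the universal picture over $\Sigma$. On $C\times\Sigma$ there is the universal sequence
\[
0\longrightarrow \mathcal{K}\longrightarrow p_C^*E_{\mid_C}\longrightarrow \mathcal{L}\longrightarrow 0,
\]
where $\mathcal{L}$ restricts on $C\times\{v\}$ to $L_v$. Since on the smooth curve $C$ the kernel $\mathcal{K}_v\subset E_{\mid_C}$ is torsion-free of rank one, it is a line bundle of fixed degree $-\deg L$, so $\mathcal{K}$ is a line bundle on $C\times\Sigma$ and the inclusion $\mathcal{K}\hookrightarrow p_C^*E_{\mid_C}$ is a global section $\Phi$ of the rank-two bundle $\mathcal{V}:=\mathcal{K}^{\vee}\otimes p_C^*E_{\mid_C}$. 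By construction $\Phi$ vanishes at $(y,v)$ exactly when $\mathcal{K}_v\hookrightarrow E_{\mid_C}$ fails to be a subbundle at $y$, i.e. exactly when $L_v$ has torsion at $y$. A rank-two bundle with a nowhere-vanishing section sits in $0\to\mathcal{O}\to\mathcal{V}\to\det\mathcal{V}\to 0$ and therefore has $c_2(\mathcal{V})=0$. Because $\det E_{\mid_C}\cong\mathcal{O}_C$ gives $p_C^*E_{\mid_C}$ trivial Chern classes, one computes $c_2(\mathcal{V})=c_1(\mathcal{K})^2$. Hence it is enough to show $c_1(\mathcal{K})^2\neq 0$ in $H^4(C\times\Sigma)$: this forces $\Phi$ to vanish somewhere, and any zero $(y,v)$ with $v\in\Sigma$ finishes the proof. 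Here the hypothesis $\dim\Sigma\ge 2$ enters, placing the obstruction class $c_1(\mathcal{K})^2$ in a degree where it can be non-zero and where the degeneracy locus $Z(\Phi)$, once non-empty, is genuinely cut out.

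To see $c_1(\mathcal{K})^2\neq 0$ I would push forward to $\Sigma$ and run the Grothendieck--Riemann--Roch argument exactly as in the proof of Proposition \ref{P2}. Pushing $c_1(\mathcal{K})^2$ along $p_\Sigma$ produces a divisor class on $\Sigma$ which, up to the standard normalization, is identified with the pullback under the (restriction-to-$C$) classifying map $\Sigma\to\mathcal{M}(C,H_{\mid_C})$ of the ample theta class $\Theta$, just as $\mathrm{Det}(\tilde{\mathcal{E}})$ was identified with $\Phi^*\mathrm{Det}(\mathcal{F})$ there. The main obstacle is precisely the positivity/finiteness input at this last step: one must verify that this classifying map is finite (equivalently that $\varphi|_\Sigma$ does not contract the surface $\Sigma$ while restricting to $C$), so that the pulled-back $\Theta$ stays ample and its self-intersection over the two-dimensional $\Sigma$ is strictly positive, whence $p_{\Sigma*}(c_1(\mathcal{K})^2)\neq 0$ and so $c_1(\mathcal{K})^2\neq 0$. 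I expect this finiteness-and-ampleness verification, rather than the formal degeneracy-locus mechanism, to be where the real work lies; the injectivity of the restriction map established in the proof of Theorem \ref{t1} (see \eqref{A3}) together with Lemma \ref{LA} should provide the needed non-contraction.
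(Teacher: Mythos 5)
Your reduction is correct and matches the paper's: the $F_v$ agree with $F$ near $x_1,\dots,x_{m-1}$, so it suffices to produce one $v\in\Sigma$ whose quotient $L_v$ acquires torsion at some $y\in C$. The Chern-class formalism is also sound as far as it goes: $\mathcal{K}$ is a line bundle on $C\times\Sigma$, the section $\Phi$ of $\mathcal{K}^{\vee}\otimes p_C^*E_{\mid_C}$ vanishes exactly on the torsion locus, and a nowhere-vanishing section would force $c_2=c_1(\mathcal{K})^2=0$. But the entire content of the lemma is then concentrated in the claim $c_1(\mathcal{K})^2\neq 0$ in $H^4(C\times\Sigma)$, and this is exactly the step you do not prove. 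The route you sketch has two unverified ingredients. First, the identification of $p_{\Sigma*}(c_1(\mathcal{K})^2)$ with a pullback of the theta class: the determinant-line-bundle computation in Proposition \ref{P2} identifies $\rho^*\Theta_k$ with the slant product $(c_2(\mathcal{E})-\tfrac14 c_1^2(\mathcal{E}))/[kH]$ for the family of \emph{surface} sheaves restricted to a high-degree curve $C_k$, which is a different class from $c_1(\mathcal{K})^2$ built out of the universal kernel on $C\times\Sigma$ for the fixed hyperplane section $C$; no comparison between the two is offered. Second, even granting such an identification, you need the relevant classifying map not to contract the surface $\Sigma$; the injectivity in \eqref{A3} was established for a family of locally free sheaves on $X_{C,L}$, not for the present $\Sigma$ of non-locally-free sheaves. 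A further warning sign is that your argument nowhere genuinely uses $\dim\Sigma\ge 2$: on $C\times\Sigma$ with $\dim\Sigma=1$ the class $c_1(\mathcal{K})^2$ lives in the top degree and could a priori be nonzero by the same reasoning, yet the lemma is false for one-dimensional families (a pencil of everywhere-locally-free quotients exists in general). So the hypothesis must enter precisely in the non-vanishing argument, and you have not located where.

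For contrast, the paper's proof is elementary and avoids Chern classes entirely. Assuming all $L_v$ locally free, each kernel $Q_v\subset E_{\mid_C}$ is a sub-line-bundle, hence a section $\bar h_v(\mathbb{P}(Q_v))$ of the ruled surface $\mathbb{P}(E_{\mid_C})\to C$, and the resulting map $\theta$ from $\Sigma$ to curves in $\mathbb{P}(E_{\mid_C})$ is injective. Fixing $P\in C$ and a point $[V]$ in the image of the evaluation map $\rho:\Sigma\to\mathbb{P}(E_P)$, the fibre $\rho^{-1}([V])$ is at least one-dimensional because $\dim\Sigma\ge 2$ and $\dim\mathbb{P}(E_P)=1$; the union $\Omega$ of the corresponding sections is then a closed two-dimensional subvariety of the two-dimensional $\mathbb{P}(E_{\mid_C})$, so it contains the whole fibre $\mathbb{P}(E_P)$ --- yet every section in the union meets that fibre only in $[V]$. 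This is where $\dim\Sigma\ge 2$ is used, and it is the contradiction your proposal would need to reproduce, in cohomological disguise, to close the gap.
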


\begin{proof}
Let $P =\{x_1, x_2, ..., x_{m-1}\}$.
Recall that  $\Sigma$ is a family of sheaves which are not locally free at  fixed  $m-1$ points $x_1, x_2,..., x_{m-1}$ which are  not on the curve $C$. Thus, all sheaves in $\Sigma$ are locally free on $C$.  We prove the Lemma by contradicting the fact 
  that all sheaves of the family are locally free along $C$. 
  
  If  all sheaves in $\Sigma$ are locally free except at $x_1, x_2, ..., x_{m-1}$, then the line bundles $L_v, v \in \Sigma$ in \ref{eqz} are locally free except at $x_1, x_2, ..., x_{m-1}$.
Thus, for each $v \in \Sigma$ we get an exact sequence of locally free sheaves of the form,
\[
 0 \longrightarrow Q_v \stackrel{h_v} \longrightarrow  E \mid_C \longrightarrow L_v \longrightarrow 0,
\]
which induces a morphism $\theta$ from $\Sigma$ to the Hilbert scheme parametrizing subvarieties of $\mathbb{P}(E_{\mid_C})$, namely, 
$ v \mapsto \bar{h_v}(\mathbb{P}(Q_v))$, where $\bar{h_v}$ is induced by $h_v$. Clearly $\theta$ is injective. Hence $\text{dim} \theta(\Sigma) \ge 2$.
Now choose a point $P \in C$ and let $E_P$ be the fiber of $E$ at $P$. Then we have a morphism 
 $ \rho: \Sigma \longrightarrow \mathbb{P}(E_P)$ associating to $v \in \Sigma$, the point $\mathbb{P}(Q_{v,P}) \in \mathbb{P}(E_P)$ which 
 factors through $\theta$. Since $\Sigma$ has
dimension $\ge 2$, $\text{dim}(\rho^{-1}([V])) \ge 1$, where $[V]$ is a point in the image of $\rho$.
Let $\Omega \subset \mathbb{P}(E_{\mid_C})$ be defined by 
\[
 \Omega:= \cup_{v \in \rho^{-1}([V])} \bar{h_v}(\mathbb{P}(Q_v)).
\]
Since $\theta$ is injective, $\text{dim}(\Omega) \ge 2$. Since $Y_F$ is complete and so is $\Sigma$, $\Omega$ is a closed subvariety of $\mathbb{P}(E_{\mid_C})$.
Thus, we have $\text{dim}(\Omega \cap \mathbb{P}(E_P)) \ge 1$. Thus, we have a complete family of dimension at least one of quotient sheaves
locally free along $C$, that
are all the same quotient at a point $P$, which is a contradiction as $\text{dim}(\mathbb{P}(V)) = 0$.
\end{proof}

\begin{corollary}\label{CL1}
 Let $Z$ as in Proposition \ref{LM3}. Then for $c_2 \le 27,  \mathcal{M}(c_2, d) \cap \partial{Z}$ is non-empty for some $d \le 18$. 
  \end{corollary}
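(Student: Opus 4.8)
The plan is to run the induction supplied by Proposition \ref{LM3}, starting from the single boundary point guaranteed by the hypothesis on $Z$ and raising the number of non-locally-free points one at a time, until the double dual of the resulting sheaf has second Chern class at most $11$.

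First I would record the numerical dictionary relating the number of singular points of a sheaf to the second Chern class of its reflexive hull. If $F$ is a torsion-free sheaf in $\overline{\mathcal{M}}(H, c_2)$, then on the smooth surface $S$ its double dual $F^{**}$ is reflexive, hence locally free, and the canonical sequence
\[
0 \longrightarrow F \longrightarrow F^{**} \longrightarrow T \longrightarrow 0
\]
has $T$ a finite-length torsion sheaf. Comparing Chern characters gives $c_2(F^{**}) = c_2 - \text{length}(T)$. Since a sheaf failing to be locally free at $k$ distinct points contributes at least one to the length at each such point, we have $\text{length}(T) \ge k$; therefore any sheaf in $Z$ which is not locally free at at least $k$ points lies in $\mathcal{M}(c_2, d)$ for $d := c_2(F^{**}) \le c_2 - k$.

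Next I would set up the induction. Because $X \subset Z$ has non-empty boundary $\partial X \subset \partial Z$, the component $Z$ contains a sheaf not locally free at at least one point; this is the base case $m = 1$. Proposition \ref{LM3} then passes from ``not locally free at $m-1$ points'' to ``not locally free at $m$ points'' whenever $4c_2 - 39 - (m-1) > 27$. I would iterate this up to $m = c_2 - 11$, at which step the inequality reads $3c_2 - 27 > 27$, that is $c_2 > 18$. As the left-hand side $4c_2 - 39 - (m-1)$ strictly decreases in $m$, every earlier step satisfies a weaker inequality, so the entire chain of length $c_2 - 11$ goes through precisely when $c_2 \ge 19$.

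Finally I would combine the two ingredients: for $c_2 \ge 19$ the induction produces a sheaf $F \in Z$ not locally free at at least $c_2 - 11$ points, whence $\text{length}(T) \ge c_2 - 11$ and $d := c_2(F^{**}) \le 11$. Thus $F \in \mathcal{M}(c_2, d) \cap Z$ with $d \le 11$, as claimed. The substantive geometric content has already been absorbed into Proposition \ref{LM3}, which I am free to invoke; the only point requiring care here, and the main (if modest) obstacle, is the arithmetic bookkeeping of the dimension inequality, namely verifying that $m = c_2 - 11$ is the largest index the induction reaches and that the binding constraint $3c_2 - 27 > 27$ is met for all $c_2 \ge 19$, so that no step is invalidated before the torsion length is large enough to force $d \le 11$.
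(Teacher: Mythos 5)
Your proposal is correct and follows the same route as the paper: start from the non-empty boundary of $Z$, iterate Proposition \ref{LM3} up to $m = c_2 - 11$, and check that the binding dimension inequality $4c_2 - 39 - (m-1) > 27$ holds exactly when $c_2 \ge 19$. The paper's own proof is a three-line version of this; your added bookkeeping relating the number of non-locally-free points to $c_2(F^{**})$ via the length of $F^{**}/F$ is implicit there and is correctly carried out.
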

 \begin{proof}
Take $m = c_2 -18$.  Note that for $c_2 \le 27, 4c_2 -39 -3(c_2-18) +2 -2(c_2-18)+2 = 55-c_2 \ge 28 $.  Since $\partial Z$ is non-empty, using the Proposition \ref{LM3} inductively
 one can conclude the corollary.
 \end{proof}
 
 
\begin{proposition}
 For $c_2 \ge 27, \overline{\mathcal{M}}(H, c_2)$ is connected. 
 \end{proposition}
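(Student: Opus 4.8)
The plan is to argue by contradiction, anchoring on the connectedness of the section locus and using O'Grady's deformation to the boundary to force every connected component down to that locus. Write $W$ for the connected component of $\overline{\mathcal{M}}(H,c_2)$ containing $\overline{V}(c_2)$; this is well defined because $\overline{V}(c_2)$ is connected for $c_2\ge 27$ by Lemma \ref{LM2}. Suppose $\overline{\mathcal{M}}(H,c_2)$ is disconnected and pick a connected component $Z\neq W$, so that $Z\cap\overline{V}(c_2)=\varnothing$ and hence $h^0(F)=0$ for every $[F]\in Z$. Choose an irreducible component $X\subset Z$. Since $\overline{\mathcal{M}}(H,c_2)$ is good (Remark \ref{RZ}), $X$ has the expected dimension $4c_2-39$ and carries a smooth point; running the deformation argument of Theorem \ref{t1} on $X$ then yields $\partial X\neq\varnothing$, so $\partial Z\neq\varnothing$ and, by \cite[Prop.~4.3]{OG}, the boundary has codimension one.

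With $\partial Z\neq\varnothing$ in hand I would descend to the deepest boundary stratum. The numerical hypothesis $4c_2-39-(m-1)>27$ of Proposition \ref{LM3} holds for all $m\le c_2-11$ once $c_2\ge 27$, so iterating that proposition produces sheaves in $Z$ failing to be locally free at arbitrarily many points; Corollary \ref{CL1} records the upshot as $\mathcal{M}(c_2,d)\cap Z\neq\varnothing$ for some $d\le 11$. Fix $[F_0]\in\mathcal{M}(c_2,d)\cap Z$ and set $E_0:=F_0^{**}\in\mathcal{M}(H,d)$. The torsion-free sheaves $F$ with $F^{**}\cong E_0$ and $\text{length}(E_0/F)=c_2-d$ are parametrized by the connected punctual $\text{Quot}(E_0,c_2-d)$; as this family contains $[F_0]\in Z$, it lies entirely in $Z$.

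The decisive step is to exhibit, inside this family, a sheaf with a section. If $h^0(E_0)\neq 0$, choose $0\neq s\in H^0(E_0)$ and form a length-$(c_2-d)$ quotient $E_0\twoheadrightarrow T$ supported at general points, taking at each point the one-dimensional quotient of the fibre whose kernel contains the value of $s$; then $s$ lifts to the kernel $F_1$, so $h^0(F_1)\neq 0$ and $[F_1]\in\overline{V}(c_2)\subset W$. But $[F_1]$ lies in the connected $\text{Quot}$ family through $[F_0]$, hence in $Z$, forcing $Z=W$, a contradiction. Everything therefore reduces to the numerical claim that every stable bundle $E_0$ with $c_1=H$ and $c_2=d\le 11$ has $h^0(E_0)\neq 0$.

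I would prove this claim by Riemann--Roch and Serre duality. Here $\chi(E_0)=19-d\ge 8$, and since $\det E_0=\mathcal{O}_S(1)$ and $K_S=\mathcal{O}_S(2)$ one has $E_0^{\vee}\otimes K_S\cong E_0(1)$, so $h^2(E_0)=h^0(E_0(1))$. If $h^0(E_0)=0$ then $h^0(E_0(1))\ge 19-d$; saturating a nonzero section of $E_0(1)$ gives $\mathcal{O}_S(k-1)\hookrightarrow E_0$ with $k\in\{0,1\}$ forced by stability of $E_0$, and $k=1$ already produces a section of $E_0$. The residual case $k=0$ leaves $0\to\mathcal{O}_S\to E_0(1)\to\mathcal{J}_W(3)\to 0$ with $\text{length}(W)=d+12$, where $h^0(E_0)=0$ forces $h^0(\mathcal{J}_W(2))=0$ while the bound on $h^0(E_0(1))$ forces $h^0(\mathcal{J}_W(3))\ge 18-d$. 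I expect this to be the \emph{main obstacle}: I must show that a length-$(d+12)$ scheme on a very general sextic cannot simultaneously impose all $10$ conditions on quadrics and fewer than $d+3$ conditions on cubics. This is exactly where $\text{Pic}(S)=\mathbb{Z}H$ is used, since such a postulation would force $W$ onto an unexpected low-degree curve, incompatible with Noether--Lefschetz, and the Cayley--Bacharach constraint of Lemma \ref{L1} for the quartic system $\mathcal{O}_S(4)$ further restricts $W$. Controlling the postulation of $W$, rather than the homological bookkeeping above, is the genuinely hard part.
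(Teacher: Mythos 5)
Your architecture is the same as the paper's: isolate a putative connected component $Z$ disjoint from the one containing $\overline{V}(c_2)$, show $\partial Z\neq\varnothing$, drive $Z$ down to a deep boundary stratum $\mathcal{M}(c_2,d)\cap Z\neq\varnothing$ with $d\le 11$ via Proposition \ref{LM3} and Corollary \ref{CL1}, and then use a section of the double dual to deform, inside the connected Quot family of subsheaves of $E_0=F_0^{**}$, to a sheaf with a section, contradicting $Z\cap\overline{V}(c_2)=\varnothing$. That last deformation step is sound as you set it up (slope stability of the kernels is automatic since they share $c_1$ with the stable $E_0$), and your decision to run the stratum-descent directly for all $c_2\ge 27$ rather than first reducing to $c_2=27$ by the paper's ``number of components is decreasing in $c_2$'' argument is a harmless simplification, since the numerical hypothesis of Proposition \ref{LM3} holds for all $c_2\ge 19$ with $m=c_2-11$.

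The genuine gap is the claim you correctly identify as the crux: that every $\mu$-stable rank-two bundle $E_0$ on $S$ with $\det E_0=\mathcal{O}_S(1)$ and $c_2(E_0)=d\le 11$ has $h^0(E_0)\neq 0$. Your homological reduction of this claim is correct ($\chi(E_0)=19-d$, $h^2(E_0)=h^0(E_0(1))$, saturation forcing $k\in\{0,1\}$, and in the residual case a length-$(d+12)$ scheme $W$ with $h^0(\mathcal{J}_W(2))=0$ yet imposing at most $d+2$ of the $20$ conditions on cubics), but you stop exactly there, and what remains is not a routine postulation estimate that falls out of $\operatorname{Pic}(S)=\mathbb{Z}H$ or a quick Cayley--Bacharach observation. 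Ruling out such badly postulated subschemes requires a case analysis of zero-dimensional schemes lying on low-degree space curves meeting $S$ (in the style of Mestrano--Simpson's obstructed-bundle classification), and in the paper this input is not proved at all: it is quoted wholesale from \cite{DS} (``By \cite{DS}, $h^0(F^{**})\neq 0$'' for $c_2(F^{**})=d\le 11$), which is a separate paper devoted precisely to classifying these bundles. So your proof is incomplete at its load-bearing step; to close it you would either need to reprove the relevant part of \cite{DS} or cite it.
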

 \begin{proof}
Let $X$ be an irreducible component of $ \overline{\mathcal{M}}(H, c_2)$ which does not intersect the connected component 
  containing $\overline{V}$. 
Thus, $H^0(E) = 0$ for all $E \in X$ and since $c_2 \ge 27$, by Proposition \ref{P1}  $X$ contains a point $E$ which is non-stable when restricted to $C$. Thus, $E_{\mid_C}$ has a destabilizing sequence of the form,
\[
0 \to L \to E_{\mid_C}  \to Q \to 0.
\]
Then, as in proof of Theorem \ref{t1},
$X$ contains a boundary point  which is not locally free.\\ (Note that the hypothesis $c_2 \ge 34$ in the Theorem \ref{t1} is used to ensure a vector bundle in $X_{C, 6}$ which admits no nonzero section. But by the hypothesis on $X$, we have $h^0(E)= 0$ for all $E \in X$. Thus the arguments in Theorem \ref{t1}, works fine without the assumption $c_ \ge 34$.)\\
On the other hand, for $c_2 \ge 21$,  the connected component containing $\overline{V}$ has a non-empty boundary too. In fact,  for $Z$ being $c_2-1$ points on a cubic plus a point not on the cubic, $h^1(\mathcal{J}_Z(3)) \ne 0$, but $Z$ does not have the Cayley-Bacharach property with respect to cubics. Thus,  the torsion free sheaf defined by $Z$ is not locally free.  Therefore, all the connected components of $\overline{\mathcal{M}}(H, c_2)$  have non-empty boundaries. 

 Let $E$ be a point of $\overline{\mathcal{M}}(H, c_2)$ in the boundary.
 Then we have the following exact sequence 
 \[
  0 \to E \to E^{**} \to E^{**}/E \to 0.
  \]
 
 By, \cite[Theorem 4.2]{SIM}, $E$ may be deformed to a
sheaf $E^{\prime}$ which is a kernel as above, but with quotient  a direct sum
of length $1$ sheaves at distinct points of $S$.
Thus, we can assume the zero cycle 
$Z_{E}\,=\,\text{supp}{E^{**}/E}$
 consists of $d$ distinct points. If $c_2 \ge 28$, removing a single point from $Z_{E}$ 
gives a torsion 
free sheaf $G$ such that $E \supset G \subset E^{**}$ and $c_2(G)= c_2(E) -1$.
This gives  a point in  $ \overline{\mathcal{M}}(H, c_2-1)$.  Therefore, all the components of $\overline{\mathcal{M}}(H, c_2)$ gives a component in $\overline{\mathcal{M}}(H, c_2 -1)$.
In other words, the number of connected components of $\overline{\mathcal{M}}(H, c_2)$ is a decreasing function of $c_2$ for $c_2 \ge 27$.

This completes the proof of connectedness for $c_2 \ge 27$ by induction, if we know it for $c_2 = 27$.
Hence, it is enough to prove that $\overline{\mathcal{M}}(H, 27)$ is connected.

 Let $X$ be an irreducible component  of $\overline{\mathcal{M}}(H, 27)$ which is contained in a connected component  $Z$ not intersecting the connected component containing  $\overline{V}(27)$. Since by above arguments, every connected component of $\mathcal{M}(H, 27)$ has a non-empty boundary,  the 
boundary $\partial{Z}$ is non-empty. \\
Thus, by Corollary \ref{CL1},  $\mathcal{M}(27, d) \cap Z$ is non-empty for some $d \le 18$.  
Let $F$ be a general point in $\mathcal{M}(27, d) \cap Z$ for some $d \le 18$. Then we have $c_2(F^{**}) =d$. 
 By \cite[Theorem 4.1]{DS}, $h^0(F^{**}) \ne 0$.
 
 Thus, we have an exact sequence
\begin{equation}\label{Z1}
0 \to \mathcal{O} \to F^{**}  \to \mathcal{J}_P(1) \to 0
\end{equation}
where $P$ is a zero-dimensional sub-scheme of length $d$.  \\
  Let $W$ be a zero cycle   of length $l(F^{**}/F)$ disjoint from $\text{Supp}(F^{**}/F)$.    Consider the natural map, 
\[
\text{Ext}^1(\mathcal J_{P}(1), \mathcal{O} \rightarrow \text{Ext}^1(\mathcal J_{P+W}(1),\mathcal O),
\]
 let $G$ denote the image of the extension class $F^{**} $ under the above map. Then we have $G \hookrightarrow F^{**} $. Since $W$ can be obtained from a continuous deformation of the points in the support of $F^{**}/F$, 
 $G$ sits inside the same connected component of $\partial X$ as $F$. Note that by construction of $G, h^0(G) \ne 0$.
    Thus, we have connected $F$ to a point $G  \in \overline{V}(27)$.
 This shows that $\overline{\mathcal{M}}(H, 27)$ has only one connected component. 

\end{proof}



\section{irreducibility of the moduli space}
\subsection{Boundary strata}
The boundary $\partial{\mathcal{M}(H, c_2)}: = \overline{\mathcal{M}}(H, c_2) - \mathcal{M}(H, c_2)$ is the set of points 
corresponding to torsion-free sheaves which are not locally free.

Let $\mathcal{M}(c_2, c_2^\prime):= \{[F] \in \overline{\mathcal{M}}(H, c_2)| F \text{ is not locally free with } c_2(F^{**})=c_2^\prime \}$.
Then the boundary has a decomposition into locally closed subsets 
\[
\partial{\mathcal{M}(H, c_2)}= \amalg_{c_2^\prime < c_2} \mathcal{M}(c_2, c_2^\prime).
\]
By the construction of $\mathcal{M}(c_2, c_2^\prime)$, we have a well defined map,
\[
 \mathcal{M}(c_2, c_2^\prime) \longrightarrow \mathcal{M}(H, c_2^\prime).
\]
The map  takes $E \longrightarrow E^{**}$. The fiber over $E \in \mathcal{M}(H, c_2^\prime)$ is the Grothendieck Quot-scheme
$\text{Quot}(E; d)$ of quotients of $E$ of length $d: = c_2 -c_2^\prime$.  Thus, the 
$\text{dim}(\mathcal{M}(c_2, c_2^\prime)) = \text{dim}(\mathcal{M}(H, c_2^\prime)) + \text{dim}(\text{Quot}(E; d))$. Now the 
dimension of $\text{Quot}(E; d)$ is $3(c_2-c_2^\prime)$. 
Therefore, 
\[
 \text{dim}(\mathcal{M}(c_2, c_2^\prime)) = \text{dim}(\mathcal{M}(H, c_2^\prime)) + 3(c_2-c_2^\prime).
\]
\begin{proposition}\label{p3}
 If $c_2 \ge 27 $ and $c_2 -c_2^\prime \ge 2$, then  the codimension of 
 $\mathcal{M}(c_2, c_2^\prime)$  in $\overline{\mathcal{M}}(H, c_2)$is at least $2$. 
  \end{proposition}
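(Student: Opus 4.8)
The plan is to read the codimension directly off the dimension formula established just above the statement,
\[
\dim \mathcal{M}(c_2, c_2') = \dim \mathcal{M}(H, c_2') + 3(c_2 - c_2'),
\]
and to control the only free quantity, $\dim \mathcal{M}(H, c_2')$. Write $d := c_2 - c_2' \ge 2$. Since $c_2 \ge 27$, Remark \ref{RZ} gives that $\overline{\mathcal{M}}(H, c_2)$ is good, hence of pure dimension equal to its expected value $4c_2 - 39$. Thus the codimension of the stratum equals $4c_2 - 39 - \dim \mathcal{M}(H, c_2') - 3d$, and the whole problem reduces to the upper bound $\dim \mathcal{M}(H, c_2') \le 4c_2' + d - 41$.

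First I would dispose of the range $c_2' \ge 20$, which is clean. Here Theorem \ref{thm1}, applied with parameter $c_2'$, says $\mathcal{M}(H, c_2')$ is good, so every component contains a point with vanishing obstruction and therefore has dimension exactly the expected value $4c_2' - 39$. Substituting into the formula gives
\[
\dim \mathcal{M}(c_2, c_2') = (4c_2' - 39) + 3d = 4c_2 - 39 - d,
\]
so the codimension is precisely $d = c_2 - c_2' \ge 2$. This already settles the proposition on the bulk of the stratification, $20 \le c_2' \le c_2 - 2$, and with the sharp value $d$.

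The remaining, genuinely harder, case is $c_2' \le 19$, where Theorem \ref{thm1} no longer applies and $\mathcal{M}(H, c_2')$ could a priori carry entirely obstructed components of excess dimension. The saving feature is that $c_2' \le 19$ forces $d \ge 27 - 19 = 8$, so the required inequality relaxes to $\dim \mathcal{M}(H, c_2') \le 4c_2' + d - 41$; in the tightest case $c_2 = 27$ this reads $\dim \mathcal{M}(H, c_2') \le 3c_2' - 14$, exceeding the expected dimension $4c_2' - 39$ by the comfortable margin $25 - c_2' \ge 6$. For the finitely many relevant values $c_2' \le 19$ I would establish this bound by invoking the low-$c_2'$ analysis of \cite{DS}: by \cite[Theorem 4.1]{DS} a stable bundle of such $c_2'$ with $h^0 = 0$ is smooth, hence of dimension $4c_2' - 39$, while the bundles with a section lie in $V(c_2')$, whose dimension is estimated exactly as in Proposition \ref{p1} via the incidence of zero-schemes on cubics (for general $Z$ of length $\le 20$ one has $h^1(\mathcal{J}_Z(3)) = 0$, so $V(c_2')$ is cut down to the special-position locus $\triangle_i$). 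Both pieces fall well inside the allowed margin.

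The main obstacle is precisely this last step: for $c_2' < 20$ one forfeits generic smoothness, so the upper bound on $\dim \mathcal{M}(H, c_2')$ must come either from \cite{DS} or from a direct estimate separating the section locus $V(c_2')$ from the potentially obstructed spectral-cover locus of bounded dimension, in the manner of Section \ref{S1}. The generous slack provided by $d \ge 8$ is what makes even a crude such estimate suffice, so the residual difficulty is conceptual—goodness fails—rather than numerical.
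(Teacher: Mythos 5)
Your reduction and your first case coincide with the paper's: goodness of $\overline{\mathcal{M}}(H,c_2)$ gives pure dimension $4c_2-39$, and for $20\le c_2'\le c_2-2$ goodness of $\mathcal{M}(H,c_2')$ gives $\dim\mathcal{M}(c_2,c_2')=4c_2'-39+3(c_2-c_2')$, hence codimension exactly $c_2-c_2'\ge 2$. The divergence, and the problem, is in the range $c_2'\le 19$. There the paper does not re-estimate $\dim\mathcal{M}(H,c_2')$ at all: it imports the actual dimensions of these low-$c_2'$ moduli spaces from Table I of \cite{DS} and checks the codimension entry by entry. You instead propose to bound $\dim\mathcal{M}(H,c_2')$ by $4c_2'+d-41$ (i.e.\ by $3c_2'-14$ in the worst case $c_2=27$) using \cite[Theorem 4.1]{DS} off the section locus together with a Proposition~\ref{p1}--style incidence estimate on $V(c_2')$, and you assert that ``both pieces fall well inside the allowed margin.'' That assertion is not verified, and with the estimates actually available in this paper it is false for small $c_2'$.

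Concretely: the incidence argument of Proposition~\ref{p1}, run with parameter $c_2'\le 19$, bounds the stratum of $\mathcal{N}(V)$ over $\{Z: h^0(\mathcal{J}_Z(3))=j\}$ by $(c_2'+20-j)+(j-20+c_2'-1)=2c_2'-1$, so it yields only $\dim V(c_2')\le 2c_2'-1$; this exceeds your target $3c_2'-14$ as soon as $c_2'\le 12$. Moreover you ignore possible components consisting entirely of type (II) (spectral--cover) obstructed bundles: the only bound this paper provides for that locus is $39$, which exceeds $3c_2'-14$ for all $c_2'\le 17$. So neither ``piece'' is controlled by the margin $25-c_2'\ge 6$ you invoke, and the residual difficulty is not merely conceptual --- the crude numerics genuinely fail. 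To close the case $c_2'\le 19$ you either need the sharper, case-by-case dimensions (and emptiness statements) of the low-$c_2'$ moduli spaces established in \cite{DS}, which is exactly what the paper cites, or you must substantially refine the bounds on $V(c_2')$ and on the type (II) locus beyond what Section~\ref{S1} gives.
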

\begin{proof}
{\bf Case I}: $20 \le c_2^\prime \le c_2 -2$.
  Since the moduli space is good for $c_2 \ge 20$, the dimension of $\mathcal{M}(H, c_2^\prime) = 4c_2^\prime -39$.
 Thus $\text{dim}(\mathcal{M}(c_2, c_2^\prime)) = 4c_2^\prime -39 +3(c_2-c_2^\prime)$.  Hence, the codimesion of 
 $(\mathcal{M}(c_2, c_2^\prime))$  is 
 $ 4c_2 -39 -4c_2^\prime +39 -3(c_2 -c_2^\prime) = c_2 -c_2^\prime \ge 2$.
 
 {\bf Case II}:   $c_2^\prime \le 19$.\\
 Adding $3(c_2-{c_2}^{'})=(81 -3c_2^\prime)$ to the corresponding entry of the third column in the tables I  in \cite{DS}, we get the following table:
 \newpage
\begin{table}
\begin{center}
\begin{tabular}{ |c|c|c|c|} 
 \hline
 ${c_2}^{'}$ & e.d & $dim(\mathcal{M}) \le$ & $dim(\mathcal{M}(27, c_2^{'}) \le$  \\ 
5 & -19 &  2 & 68 \\ 
 6 & -15 &  3 & 66 \\ 
 7 & -11 & -1 & 59  \\
 8 & -7  &  7 & 64   \\
 9 & -3 & 10 & 64  \\
 10 & 1 & 11 & 62  \\
  11 & 5 & 13 & 61  \\
  12 & 9 & 19 & 64 \\
  13 & 13& 24 & 66 \\
  14 & 17 & 26 & 65 \\
  15 & 21 & 28 & 29 \\
  16 & 25 & 30 & 63 \\
  17 & 29 & 34 & 64 \\
  18 & 33 & 38 & 65 \\
  19 & 37 & 42 & 66  \\
  
 \hline
\end{tabular}

\caption{upper bounds of the dimensions of $\mathcal{M}(27, {c_2}^{'})$}
\label{table:1}
\end{center}

\end{table}

Thus $(\mathcal{M}(c_2, c_2^\prime))$ has  codimesion 
 $ \ge  2$,  which  concludes the Theorem.
\end{proof}

\begin{theorem}\label{THM2}
 $\mathcal{M}(H, c_2)$ is irreducible for $c_2 \ge 27$. 
 \end{theorem}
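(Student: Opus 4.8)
The plan is to assemble the pieces established in the earlier sections and to use the connectedness theorem together with Hartshorne's connectedness theorem as the backbone. First I would recall the overall structure: by Theorem \ref{thm1} and Remark \ref{RZ}, $\overline{\mathcal{M}}(H, c_2)$ is good for $c_2 \ge 27$, so by Kuranishi deformation theory it is a local complete intersection of pure dimension $4c_2 - 39$. Hartshorne's connectedness theorem then guarantees that if two distinct irreducible components meet, they must meet in a subvariety of codimension exactly $1$ (i.e. of dimension $4c_2 - 40$), and this intersection is necessarily contained in the singular locus. Since I have just proved in the previous section that $\overline{\mathcal{M}}(H, c_2)$ is connected for $c_2 \ge 27$, any two components can be joined by a chain of components meeting pairwise, so it suffices to rule out any two distinct components actually meeting.

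The heart of the argument is to locate where such a codimension-$1$ intersection could possibly live and then show it cannot. I would argue that the intersection, lying in the singular locus, must be contained in the union of the locus $V$ (bundles with $h^0 > 0$), the type-(II) potentially obstructed locus, and the boundary strata $\mathcal{M}(c_2, c_2')$. By Proposition \ref{p1} the dimension of $V$ is $3c_2 - 21$, and by the type-(II) estimate that locus has dimension $\le 39$; for $c_2 \ge 27$ both of these are strictly smaller than $4c_2 - 40$, so neither can contain a codimension-$1$ intersection of two components. By Proposition \ref{p3}, every boundary stratum $\mathcal{M}(c_2, c_2')$ with $c_2 - c_2' \ge 2$ has codimension $\ge 2$. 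The only boundary stratum left that can possibly have codimension $1$ is $\mathcal{M}(c_2, c_2 - 1)$, the sheaves non-locally-free at a single point with $c_2(F^{**}) = c_2 - 1$.

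Thus any two distinct components would have to meet along a codimension-$1$ subset of $\mathcal{M}(c_2, c_2-1)$, and in particular their intersection would contain a general point of some component of $\mathcal{M}(c_2, c_2-1)$. Here I would invoke Remark \ref{R2}: since $c_2 - 1 \ge 26 \ge 20$, a general point of $\mathcal{M}(c_2, c_2-1)$ is a \emph{smooth} point of $\overline{\mathcal{M}}(H, c_2)$. But a smooth point lies on a unique irreducible component, contradicting the assumption that two distinct components meet there. This contradiction forces the moduli space to consist of a single irreducible component; combined with connectedness, irreducibility follows.

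The step I expect to be the main obstacle is the dimension bookkeeping that certifies $\mathcal{M}(c_2, c_2-1)$ is the \emph{only} candidate codimension-$1$ stratum and that the singular locus otherwise sits in codimension $\ge 2$. Concretely, one must be careful that the type-(II) bound of $39$ and the bound $3c_2 - 21$ for $\dim V$ are both genuinely below $4c_2 - 40$ at the threshold $c_2 = 27$ (where $4c_2 - 40 = 68$, $3c_2 - 21 = 60$, and $39 < 68$), so no slack is lost, and that Remark \ref{R2}'s smoothness claim applies at $c_2' = c_2 - 1$ uniformly for all $c_2 \ge 27$. Once these inequalities are in place the logical structure is forced, so the delicate part is entirely in verifying that every possible locus of pairwise intersection has been accounted for and correctly dimension-bounded.
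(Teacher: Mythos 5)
Your proposal is correct and follows essentially the same route as the paper: goodness of $\overline{\mathcal{M}}(H,c_2)$ gives the local complete intersection property, Hartshorne's connectedness theorem forces any intersection of two components to be of codimension $1$ inside the singular locus, the dimension bounds on $V$, the type-(II) locus, and the boundary strata leave $\mathcal{M}(c_2,c_2-1)$ as the only possible codimension-$1$ host, and Remark \ref{R2} (smoothness of a general point there) yields the contradiction. The only cosmetic difference is that the paper first rules out intersections in the open locus $\mathcal{M}(H,c_2)$ via Proposition \ref{p2} and then pushes the intersection into the boundary, whereas you treat all pieces of the singular locus in one uniform dimension count; the content is identical.
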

 \begin{proof}
 Note that, since $\mathcal{M}(H, c_2)$ is good and each boundary strata has  dimension smaller than the expected dimension, $\overline{\mathcal{M}}(H, c_2)$ is also good.
 Since $\mathcal{M}(H, c_2)$ is good, it is local complete intersection \cite[Lemma 2.1]{SIM}. 
  If possible, let $X$ and $Y$ be two distinct irreducible components of $\mathcal{M}(H, c_2)$. Then the  intersection 
  $X\cap Y = \varnothing$,
    otherwise, it would be of codimension one and contained in the singular locus \cite[Theorem 4.1, Corollary 4.2]{SIM}.
    But if $c_2 \ge 21$ then by Proposition \ref{p2} the singular locus is of codimension at least $2$, a contradiction.
  
  Since $\mathcal{M}(H, c_2)$ is good, there is a bijection between the irreducible components of $\mathcal{M}(H, c_2)$ and 
  $\overline{\mathcal{M}}(H, c_2)$. But $\overline{\mathcal{M}}(H, c_2)$ is connected. Therefore, there exists two 
  irreducible components $\overline{X}$ and $\overline{Y}$, where $X$ and $Y$ 
  are two distinct irreducible components of $\mathcal{M}(H, c_2)$ 
  such that  their intersection $Z:= \overline{X} \cap \overline{Y} \ne \varnothing$.
  Again since $\overline{\mathcal{M}}(H, c_2)$ is good, $Z$ is of codimension one and contained in the singular locus and also
  contained in $\partial{\mathcal{M}(H, c_2)}$ which is also of codimension $1$ in 
  $\overline{\mathcal{M}}(H, c_2)$ \cite{OG}.

  On the other hand,  by Proposition \ref{p3}, the boundary $\partial{\mathcal{M}(H, c_2)}$ is a union of the stratum 
  $\mathcal{M}(c_2, c_2-1)$ of co-dimension
  $1$, plus other strata of strictly smaller dimension.
  Thus $Z$ intersects $\mathcal{M}(c_2, c_2-1)$ in an open set, which is a contradiction, as the general point of 
  $\mathcal{M}(c_2, c_2-1)$ is a smooth point of $\overline{\mathcal{M}}(H, c_2)$ [Remark \ref{R2}].
  Hence, the theorem follows.
  
 \end{proof}
 Then we have the following Corollary:
\begin{corollary}
 Under the hypothesis of Theorem  \ref{THM2}, $\overline{\mathcal{M}}(H, c_2)$ is irreducible. 
 \begin{proof}
 Note that, as in proposition \ref{p3} $\text{dim}(\mathcal{M}(c_2, c_2 -1)= 4c_2 -4 -39 +3= 4c_2 -40$ for $c_2 \ge 21$ and by proposition \ref{p3}, other strata of the boundary have strictly smaller dimension.
 Thus,  no component of $\overline{\mathcal{M}}(H, c_2)$ can be entirely contained in the boundary. In other words, the there is a bijection between the irreducible components of $\overline{\mathcal{M}}(H, c_2)$ and that of $\mathcal{M}(H, c_2)$. Since by theorem \ref{THM2}, the later one is irreducible, $\overline{\mathcal{M}}(H, c_2)$ is irreducible.
 \end{proof}
\end{corollary}


\section*{Acknowledgement}
I wish to express my thanks to Dr. Arijit Dey for introducing me to this problem and for many important discussions. I also would like to thank Prof. Carlos Tschudi Simpson  for many important discussions with him  and  for many important suggestions and comments.

\end{document}